\theoremstyle{definition} 
\theoremstyle{plain} 
\theoremstyle{remark} 
\numberwithin{equation}{section}
\newtheorem{dfn}{Definition}[section]
\newtheorem{lem}[dfn]{Lemma}
\newtheorem{thm}[dfn]{Theorem}
\newtheorem{prop}[dfn]{Proposition}
\theoremstyle{definition}
\newtheorem{asm}[dfn]{Assumption}
\newtheorem{exm}[dfn]{Example}
\newtheorem{rem}[dfn]{Remark}
\def\listofalgorithms{\@starttoc{loa}\listalgorithmname}
\def\l@algorithm{\@tocline{0}{3pt plus2pt}{0pt}{1.9em}{}}
\renewcommand{\ALG@name}{Algorithm}
\renewcommand{\listalgorithmname}{List of \ALG@name s}
\numberwithin{algorithm}{section}
\DeclareOldFontCommand{\rm}{\normalfont\rmfamily}{\mathrm}
\DeclareOldFontCommand{\sf}{\normalfont\sffamily}{\mathsf}
\DeclareOldFontCommand{\tt}{\normalfont\ttfamily}{\mathtt}
\DeclareOldFontCommand{\bf}{\normalfont\bfseries}{\mathbf}
\DeclareOldFontCommand{\it}{\normalfont\itshape}{\mathit}
\DeclareOldFontCommand{\sl}{\normalfont\slshape}{\@nomath\sl}
\DeclareOldFontCommand{\sc}{\normalfont\scshape}{\@nomath\sc}
\def\@seccntformat#1{\@ifundefined{#1@cntformat}%
   {\csname the#1\endcsname\space}
   {\csname #1@cntformat\endcsname}}
\newcommand\section@cntformat{\thesection.\space}       
\newcommand\subsection@cntformat{\thesubsection.\space} 
\title{\normalfont\spacedlowsmallcaps{An inertial extrapolation method}\\ \normalfont\spacedlowsmallcaps{for convex simple bilevel optimization}} 
\author{Yekini Shehu$^{\dag}$,  Phan Tu Vuong$^{\ddag}$, and  Alain Zemkoho$^{\natural}$}
\date{\today} 
\titleformat{\subsection}[runin]
  {\normalfont\large\bfseries}{\thesubsection}{0.5em}{}
\begin{document}


\renewcommand{\sectionmark}[1]{\markright{\spacedlowsmallcaps{#1}}} 
\lehead{\mbox{\llap{\small\thepage\kern1em\color{halfgray} \vline}\color{halfgray}\hspace{0.5em}\rightmark\hfil}} 

\pagestyle{scrheadings} 


\maketitle 

\setcounter{tocdepth}{2} 

%
%


\section*{Abstract} 
\noindent We consider a scalar objective minimization problem over the solution set of another optimization problem. This problem is known as \emph{simple bilevel optimization problem} and has drawn a significant attention in the last few years. Our inner problem consists of minimizing the sum of smooth and nonsmooth functions while the outer one is the minimization of a smooth convex function. We propose and establish the convergence of a fixed-point iterative method with inertial extrapolation to solve the problem. Our numerical experiments show that the method proposed in this paper outperforms the currently best known algorithm to solve the class of problem considered.



\let\thefootnote\relax\footnotetext{$\dag$ \textit{Department of Mathematics, University of Nigeria, Nsukka, Nigeria; e-mail: \url{yekini.shehu@unn.edu.ng}. Current address (May 2016 -- April 2019): Institute of Mathematics, University of W\"urzburg, Emil-Fischer-Str.\ 30,
97074 W\"urzburg, Germany. The research of this author is supported by the
Alexander von Humboldt-Foundation.}}

\let\thefootnote\relax\footnotetext{$\ddag$ \textit{Faculty of Mathematics, University of Vienna, Oskar-Morgenstern-Platz 1, 1090 Vienna, Austria; e-mail:
\url{vuong.phan@univie.ac.at}. The work of this author is funded by the FWF Grant M 2499 Meitner-Programm.}}

\let\thefootnote\relax\footnotetext{$\natural$ \textit{School of Mathematics, University of Southampton
       SO17 1BJ Southampton, UK; e-mail: \url{a.b.zemkoho@soton.ac.uk}. The work of this author is funded by the  EPSRC Grant EP/P022553/1.}}


\section{Introduction}
Our main aim in this paper is to solve a scalar objective minimization problem over the solution set of another optimization problem; i.e., precisely, the problem
\begin{eqnarray}\label{bilevel1}
\min~h(x) \; \mbox{ s.t. } \; x \in X^*\subseteq \mathbb{R}^n,
\end{eqnarray}
where $h : \mathbb{R}^n \rightarrow \mathbb{R}$ is assumed to be strongly convex and differentiable, while $X^*$ is the nonempty set of minimizers of the classical convex composite optimization problem
\begin{eqnarray}\label{bilevel2}
\min~\varphi(x) := f(x) + g(x),
\end{eqnarray}
where $f : \mathbb{R}^n \rightarrow \mathbb{R}$ is continuously differentiable and $g$, an extended real-valued function on $\mathbb{R}^n$, which can be nonsmooth. Problem \eqref{bilevel1}--\eqref{bilevel2} was labeled in \cite{DempeDinhDutta2010} as \emph{simple bilevel optimization problem}, as opposed to the more general version of the problem (see, e.g., \cite{DempeFoundations}), where the follower's problem \eqref{bilevel2} is parametric, with the parameter representing the variable controlled by the leader, which is in turn different from the one under the control of the follower. For more details on the vocabulary and connections of problem  \eqref{bilevel1}--\eqref{bilevel2} to the standard bilevel optimization problem, see Subsection \ref{Standard bilevel optimization} below. \\

\noindent A common approach to solve problem \eqref{bilevel1}--\eqref{bilevel2} consists of the
Tikhonov-type regularization \cite{Tikhonov} (indirect method), based on solving the following
regularized problem
\begin{eqnarray}\label{bilevel3}
\min~\varphi_\lambda(x) := \varphi(x) + \lambda h(x)
\end{eqnarray}
for some $\lambda >0$. Note that problem  \eqref{bilevel1}--\eqref{bilevel2} can be traced back to the work by Mangasarian and Meyer \cite{MangasarianMeyer1979} in the process of developing efficient algorithms for large scale linear programs. The model emerged in turn as a refinement of the regularization technique introduced by Tikhonov \cite{Tikhonov}. The underlying idea in the related papers by Mangasarian and his co-authors is called \emph{finite-perturbation property}, which consists of finding a parameter $\bar \lambda$ (\emph{Tikhonov perturbation parameter}) such that for all $\lambda\in [0, \; \bar \lambda]$,
\begin{eqnarray}\label{Finite-perturbation}
\arg\underset{x \in X^*}\min~h(x) = \arg\underset{x \in \mathbb{R}^n}\min~\varphi_\lambda(x):=\varphi(x) + \lambda h(x).
\end{eqnarray}
This property, initially proven in \cite{MangasarianMeyer1979} when the lower-level problem is a linear program, was later extended in \cite{FerrisMangasarian} to the case where it is a general convex optimization problem. \\

\noindent To the best of our knowledge, the development of solution algorithms specifically tailored to optimization problems of the form \eqref{bilevel1}--\eqref{bilevel2} can be traced back to the work by Cabot \cite{Cabot2005}, where a \emph{proximal point method} is proposed to solve the problem and its extension to a simple hierarchical optimization problem with finitely many levels. In contrary to the latter paper, where the approximation scheme is only implicit thus making the method not easy to numerically implement,  Solodov \cite{Solodov2} proposed an explicit and more tractable proximal point method for problem \eqref{bilevel1}--\eqref{bilevel2}. Since then, various proximal point algorithms have been developed to solve the problems under different types of frameworks, see, e.g., \cite{BotNguyen2018, malitsky2017chambolle, SabachShtern} and references therein.\\

%
\noindent Motivated by the results in \cite{BeckSabach}, Sabach and Shtern  \cite{SabachShtern} recently proposed
the following scheme (with $x_0 \in \mathbb{R}^n$ as starting point), called \emph{Bilevel Gradient Sequential Averaging Method} (abbreviated as \emph{BiG-SAM}), to solve problem \eqref{bilevel1}--\eqref{bilevel2}:
   \begin{equation}\label{e1}
\left\{\begin{array}{l}
s_n={\rm prox}_{\lambda g}(x_{n-1}-\lambda \nabla f(x_{n-1}))\\
z_n=x_{n-1}-\gamma \nabla h(x_{n-1})\\
x_{n+1}=\alpha_n z_n+(1-\alpha_n)s_n,~~n \geq 1
\end{array}\right.
\end{equation}
with $\lambda \in \left(0,\frac{1}{L_f}\right]$, $\gamma \in \left(0,\frac{2}{L_h +\sigma}\right]$, and $\{\alpha_n\}$ satisfying the conditions assumed in \cite{XuViscosity}. Sabach and Shtern \cite{SabachShtern} obtained a nonasymptotic O($\frac{1}{n}$) global rate of convergence in terms
of the inner objective function values and showed that BiG-SAM \eqref{e1} appears simpler and cheaper than the method proposed in \cite{BeckSabach}.
The numerical example in \cite{SabachShtern} also showed that BiG-SAM \eqref{e1} outperforms the method in \cite{BeckSabach} for solving problem \eqref{bilevel1}--\eqref{bilevel2}. The algorithm in \cite{SabachShtern} seems to be the most efficient method developed so far for convex simple bilevel optimization problems.\\


\noindent Inspired by recent results on \emph{inertial extrapolation} type algorithms for solving optimization problem
(see, e.g., {\cite{Attouch3,Beck,Bot2,Ochs} and references therein),
 our aim in this paper is to solve problem \eqref{bilevel1}--\eqref{bilevel2} by introducing an inertial extrapolation step to BiG-SAM \eqref{e1}
 (which we shall call \emph{iBiG-SAM}). 
  We then establish the global convergence of our method  under reasonable assumptions. Numerical experiments show that the proposed method outperforms the BiG-SAM \eqref{e1} introduced in \cite{SabachShtern}.\\ 

\noindent For the remainder of the paper, first note that there is a striking similarity between the exact penalization model \eqref{bilevel3}   and a corresponding partial penalization approach based on the \emph{partial calmness concept} \cite{YeZhuOptCondForBilevel1995} often used to solve the general bilevel optimization problem. Both approaches seem to have originated from completely different sources and their development also seems to be occurring  independently from each other till now. In Subsection \ref{Standard bilevel optimization}, we clarify this similarity and discuss some strong relationships between the two problem classes. In Subsection \ref{Sec:Prelims}, we recall some basic definitions and results that will play an important role in the paper. The proposed method and its convergence analysis are presented in Section~\ref{Sec:Method}. Some numerical experiments are given in Section~\ref{Sec:Numerics}. We conclude the paper with some final remarks in Section~\ref{Sec:Final}.

\section{General context and mathematical tools}
\subsection{Standard bilevel optimization.} \label{Standard bilevel optimization} 
 In this subsection, we provide a discussion to place the simple bilevel optimization introduced above in a general context of bilevel optimization. To proceed, we consider a simple optimistic version of the latter class of problem, which aligns suitably with problem \eqref{bilevel1}--\eqref{bilevel2}, i.e.,
\begin{equation}\label{Standard}
    \underset{x,y}\min~h(x, y) \;\, \mbox{ s.t. }\;\, y\in S(x)
\end{equation}
where $h : \mathbb{R}^n\times \mathbb{R}^m \rightarrow \mathbb{R}$ represents the upper level objective function and the set-valued mapping $S$ defines the the set of optimal solutions of the lower level problem
\begin{equation}\label{lower-level}
    \underset{y}\min~\varphi(x,y)
\end{equation}
($\varphi : \mathbb{R}^n\times \mathbb{R}^m \rightarrow \mathbb{R}$) for any fixed upper level variable $x$. Obviously, problem \eqref{bilevel1}--\eqref{bilevel2} is a special case of problem \eqref{Standard}--\eqref{lower-level}, where the optimal solution of the leader is simply picked among the optimal solutions of the lower level problem, which in turn are obtained without any influence from the leader as it is the case in the latter problem.\\

\noindent On the other hand problem \eqref{Standard}--\eqref{lower-level} can be equivalently written as the following optimization problem over an efficient set
$$
\underset{x,y}\min~h(x,y) \; \mbox{ s.t. }\; (x,y)\in E\left(\mathbb{R}^n\times \mathbb{R}^m, \; \bar{\varphi},\; \preccurlyeq\right),
$$
where $E\left(\mathbb{R}^n\times \mathbb{R}^m, \; \bar{\varphi},\; \preccurlyeq\right)$ denotes the efficient set (i.e., optimal solution set) of the problem of minimizing a multiobjective function $\bar \varphi$ (based on $\varphi$ \eqref{lower-level}) over $\mathbb{R}^n\times \mathbb{R}^m$ w.r.t. a certain order relation $\preccurlyeq$; for examples of choices of the latter function and corresponding order relations, see the papers \cite{Eichfelder, Fulop}. Obviously, an optimization problem over an efficient set is a generalization of the simple bilevel optimization problem \eqref{bilevel1}--\eqref{bilevel2}, and has been extensively investigated since the seminal work by Philip \cite{Phillip1972}; see \cite{Yamamoto2002} for a literature review on the topic.\\

\noindent One common approach to transform problem \eqref{Standard}--\eqref{lower-level} into a single-level optimization problem is the so-called lower-level optimal value function (LLVF) reformulation
\begin{equation}\label{LLVF}
    \underset{x, y}\min~h(x, y) \;\; \mbox{ s.t. }\;\; \varphi(x,y)\leq \varphi^*(x),
\end{equation}
where the function $\varphi^*(x)=\underset{y}\min~\varphi(x,y)$ represents the optimal value function of the lower level problem \eqref{lower-level}.
Recall that this reformulation is an underlying feature in the development of the link \eqref{Finite-perturbation} between the simple bilevel optimization problem \eqref{bilevel1}--\eqref{bilevel2} and the penalized problem \eqref{bilevel3} as outlined in the corresponding publications; see, e.g., \cite{FerrisMangasarian}. However, we instead want to point out here an interesting similarity between the finite termination property \eqref{Finite-perturbation} and the \emph{partial calmness} concept \cite{YeZhuOptCondForBilevel1995} commonly used in the context of standard bilevel optimization. To highlight this, let $(\bar x, \bar y)$ be a local optimal solution of \eqref{bilevel1}--\eqref{bilevel2}. The problem is partially calm at $(\bar x, \bar y)$ if and only if there exists $\lambda >0$ such that $(\bar x, \bar y)$ is also a local optimal solution of the penalized problem
\begin{equation}\label{Penalized LLVF}
    \underset{x, y}\min~h(x, y) + \lambda\left(\varphi(x,y)-\varphi^*(x)\right).
\end{equation}
The partial calmness concept does not automatically hold for the simple bilevel optimization problem \eqref{bilevel1}--\eqref{bilevel2}. To see this, consider the example of convex simple bilevel optimization problem of minimizing $(x-1)^2$ subject to $x\in \arg\min~y^2$.
It is clear that  $0$ is the only optimal solution of this problem. But for the corresponding penalized problem \eqref{Penalized LLVF} to minimize $(x-1)^2 + \lambda x^2$,
we can easily check that the optimal solution is the number $x(\lambda):= \frac{1}{1+\lambda}$ for all $\lambda >0$. Clearly, $x(\lambda)\neq 0$ for all $\lambda >0$. \\

\noindent It is also important to note that, possibly unlike the finite termination property \eqref{Finite-perturbation}, the partial calmness concept was introduced as a qualification condition to derive necessary optimality conditions for problem \eqref{LLVF}; see \cite{DempeZemkohoGenMFCQ,YeZhuOptCondForBilevel1995} for some papers where this concept is used, and also the papers \cite{DempeDinhDuttaPandit2018, FrankeMehlitzPilecka2018} for new results on simple bilevel optimization problems from the perspective of standard bilevel optimization.

\subsection{Basic mathematical tools}\label{Sec:Prelims}
We state the following well-known lemmas which will be used in our convergence analysis in the sequel.

\begin{lem}\label{lm2}
The following  well-known results hold in $\mathbb{R}^n$:
\begin{itemize}
\item[(i)] $||x+y||^2=||x||^2+2\langle x,y\rangle+||y||^2, \;\;\forall x, y \in \mathbb{R}^n;$
\item[(ii)] $||x+y||^2 \leq ||x||^2+ 2\langle y, x+y \rangle, \;\;\forall x, y \in \mathbb{R}^n;$
\item[(iii)] $\|tx+sy\|^2=t(t+s)\|x\|^2+s(t+s)\|y\|^2-st\|x-y\|^2,\;\; \forall x, y \in \mathbb{R}^n, \;\; s, t \in \mathbb{R}.$
\end{itemize}
\end{lem}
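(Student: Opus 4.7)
All three items are Hilbert-space identities in $(\mathbb{R}^n,\langle\cdot,\cdot\rangle)$ and rely only on bilinearity and symmetry of the inner product; they do not invoke any of the hypotheses on $f,g,h,\varphi$ from the paper. The plan is simply to expand squared norms and collect terms.

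For (i), I will expand $\|x+y\|^2=\langle x+y,x+y\rangle$ by bilinearity and combine the two equal cross terms $\langle x,y\rangle$ and $\langle y,x\rangle$ into $2\langle x,y\rangle$. Part (ii) will then follow as an immediate consequence: expanding the right-hand side via bilinearity gives $\|x\|^2+2\langle y,x\rangle+2\|y\|^2$, which exceeds the expression from (i) by the non-negative quantity $\|y\|^2$, so the inequality reduces to $\|y\|^2\ge 0$.

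For (iii), I will expand the left-hand side via bilinearity to $t^2\|x\|^2+2st\langle x,y\rangle+s^2\|y\|^2$ and, on the right-hand side, rewrite $\|x-y\|^2$ using (i) so that $-st\|x-y\|^2=-st\|x\|^2+2st\langle x,y\rangle-st\|y\|^2$. Matching the coefficients of $\|x\|^2$, $\|y\|^2$ and $\langle x,y\rangle$ then reduces the claim to the two scalar identities $t(t+s)-st=t^2$ and $s(t+s)-st=s^2$.

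There is no genuine obstacle here; the lemma is a catalogue of elementary identities recorded for later use in Section~\ref{Sec:Method}. The only place where a careless expansion could slip is in (iii), since $s$ and $t$ are allowed to be arbitrary real numbers (positive, negative or zero), and therefore the identity must be verified as a pure algebraic rearrangement rather than as a consequence of any sign information on $s,t$.
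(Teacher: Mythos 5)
Your proof is correct: the paper states Lemma~\ref{lm2} without proof, treating all three items as well-known facts, and your direct expansion of the inner products by bilinearity (with the check that (ii) reduces to $\|y\|^2\ge 0$ and (iii) to a purely algebraic coefficient match valid for all real $s,t$) is exactly the standard argument the paper leaves implicit.
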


\begin{lem}(see, e.g., \cite{xu2})\label{lm23}
Let $\{a_n\}$ and $\{\gamma_n\}$ be sequences  of nonnegative real numbers, $\{\alpha_n\}$ a sequence in (0,1) and $\{\sigma_n\}$ a real sequence satisfying the following relation:
$$a_{n+1}\leq(1-\alpha_n)a_n+\sigma_n+\gamma_n,~~n \geq 1.$$
\noindent Assume $\sum\gamma_n<\infty.$
Then the following results hold:
\begin{itemize}
\item[(i)] If $\sigma_n \leq \alpha_n M$ for some $M\geq 0$, then $\{a_n\}$ is a bounded sequence.
\item[(ii)] If $\sum\alpha_n=\infty$ and $\limsup \frac{\sigma_n}{\alpha_n}\leq 0$, then $\lim a_n=0$.\\
\end{itemize}
\end{lem}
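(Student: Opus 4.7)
The plan is to handle the two parts separately, both by unfolding the one-step recurrence into an explicit upper bound.

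For part (i), I would proceed by straightforward induction. Set $C := \max\{a_1,\, M\}$ and, for $n \geq 1$, define $B_n := C + \sum_{k=1}^{n-1}\gamma_k$, so that $M \leq C \leq B_n$. The claim to prove by induction is $a_n \leq B_n$. The base case $n=1$ holds by definition of $C$. For the induction step, the hypothesis $\sigma_n \leq \alpha_n M$ gives
\begin{equation*}
a_{n+1} \leq (1-\alpha_n)\, a_n + \alpha_n M + \gamma_n \leq (1-\alpha_n)\, B_n + \alpha_n B_n + \gamma_n = B_{n+1}.
\end{equation*}
Since $\sum_{k=1}^{\infty}\gamma_k < \infty$, the bound $B_n$ is itself bounded, and so is $\{a_n\}$.

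For part (ii), fix $\varepsilon > 0$. Using $\limsup \sigma_n/\alpha_n \leq 0$ and $\sum\gamma_n < \infty$, I would choose $N$ large enough that $\sigma_n \leq \varepsilon \alpha_n$ for all $n \geq N$ and $\sum_{k=N}^{\infty}\gamma_k < \varepsilon$. Iterating the recurrence from $N$ up to $m-1$ then yields
\begin{equation*}
a_m \leq \Pi_m\, a_N + \sum_{k=N}^{m-1}(\varepsilon \alpha_k + \gamma_k)\prod_{j=k+1}^{m-1}(1-\alpha_j), \qquad \Pi_m := \prod_{k=N}^{m-1}(1-\alpha_k).
\end{equation*}
The central tool is the standard telescoping identity $\sum_{k=N}^{m-1}\alpha_k\prod_{j=k+1}^{m-1}(1-\alpha_j) = 1 - \Pi_m$, which follows immediately from the observation $\alpha_k\prod_{j=k+1}^{m-1}(1-\alpha_j) = \prod_{j=k+1}^{m-1}(1-\alpha_j) - \prod_{j=k}^{m-1}(1-\alpha_j)$. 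Applying this to the $\alpha_k$-weighted piece, and bounding the $\gamma_k$-weights crudely by $\prod_{j=k+1}^{m-1}(1-\alpha_j)\leq 1$, I obtain $a_m \leq \Pi_m\, a_N + \varepsilon(1-\Pi_m) + \varepsilon$. Finally, the divergence $\sum\alpha_k = \infty$ combined with the classical estimate $\Pi_m \leq \exp\bigl(-\sum_{k=N}^{m-1}\alpha_k\bigr)$ forces $\Pi_m \to 0$, hence $\limsup_{m\to\infty} a_m \leq 2\varepsilon$. Since $\varepsilon > 0$ is arbitrary and $a_m \geq 0$, this gives $a_m \to 0$.

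Neither part is genuinely hard: (i) is a pure induction, and (ii) is the classical Xu recurrence lemma. The only mildly delicate point is to choose the threshold index $N$ so that it simultaneously controls $\sigma_n$ via $\limsup \sigma_n/\alpha_n \leq 0$ and absorbs the tail $\sum_{k \geq N}\gamma_k$ within the same $\varepsilon$, and to notice that the $\gamma_k$-weights can be uniformly bounded by $1$ so that the telescoping identity is only needed for the $\alpha_k$-weighted sum.
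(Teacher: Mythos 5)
Your proof is correct: part (i) is a clean induction on the bound $B_n = \max\{a_1,M\} + \sum_{k<n}\gamma_k$, and part (ii) correctly unrolls the recurrence past a threshold $N$, uses the telescoping identity for the $\alpha_k$-weighted sum, bounds the $\gamma_k$-tail by $\varepsilon$, and kills $\Pi_m$ via $\prod(1-\alpha_k)\leq\exp\bigl(-\sum\alpha_k\bigr)$ and $\sum\alpha_k=\infty$. The paper gives no proof of this lemma --- it simply cites Xu's 2002 result --- and your argument is essentially the standard proof from that reference, so there is nothing further to reconcile.
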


\noindent We state the formal definition of some classes of operators that
play an essential role in our analysis in the sequel.

\begin{dfn}\label{Def:LipMon}
An operator $ T: \mathbb{R}^n\rightarrow \mathbb{R}^n $
is called
\begin{itemize}
   \item[(a)] {\em nonexpansive} if and only if $  \| Tx - Ty \| \leq \| x - y \| $ for all $ x, y \in \mathbb{R}^n $;
\item[(b)] {\em averaged} if and only if it can be written as
the average of the identity mapping $I$ and a nonexpansive operator, i.e.,
$T: = (1-\beta)I + \beta S$ with $\beta\in (0,1)$ and $S:\mathbb{R}^n\rightarrow \mathbb{R}^n$ being a nonexpansive operator. More precisely, we say that $T$ is $\beta$-averaged;
 \item[(c)] {\em firmly nonexpansive} if and only if $2T-I$ is nonexpansive, or equivalently,
$$
\langle Tx-Ty, x-y\rangle \geq \|Tx-Ty\|^2, \;\; \forall x, y \in \mathbb{R}^n.
$$
\noindent Alternatively, $T$ is said to be firmly nonexpansive if and only if it can be expressed as
$T:=\frac{1}{2}(I+S)$, where $S:\mathbb{R}^n\rightarrow \mathbb{R}^n$ is nonexpansive.
\end{itemize}
\end{dfn}

\noindent
We can see from above that firmly nonexpansive operators (in particular,
projections) are $\frac{1}{2}$-averaged.

\begin{lem} (\cite{Goebelkirk}) \label{lm25}
Let $T:\mathbb{R}^n\rightarrow \mathbb{R}^n$ be a nonexpansive operator. Let $\{x_n\}$ be a sequence in $\mathbb{R}^n$ and $x$ be a point in $\mathbb{R}^n$.
Suppose that $x_n\rightarrow x$ as $n\rightarrow \infty$ and that
$x_n-Tx_n\rightarrow 0$ as $n\rightarrow \infty$. Then, $x\in F(T)$, where $F(T)$ is the set of fixed points of $T$.\\
\end{lem}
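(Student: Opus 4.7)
The plan is to exploit the continuity of $T$, which is automatic since any nonexpansive operator is $1$-Lipschitz, together with uniqueness of limits in $\mathbb{R}^n$. Concretely, I would argue that $Tx_n$ converges to $x$ along two different routes, and conclude that $Tx = x$.

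First I would observe that $T$ is continuous: for every $u,v \in \mathbb{R}^n$, $\|Tu - Tv\| \le \|u - v\|$ by Definition~\ref{Def:LipMon}(a). Hence $x_n \to x$ implies $Tx_n \to Tx$. Second, writing $Tx_n = x_n - (x_n - Tx_n)$ and using the hypotheses $x_n \to x$ and $x_n - Tx_n \to 0$, the elementary limit laws give $Tx_n \to x - 0 = x$. By uniqueness of limits in $\mathbb{R}^n$, $Tx = x$, i.e., $x \in F(T)$.

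There is essentially no obstacle here; the statement is the finite-dimensional analogue of the demiclosedness principle, which in Hilbert spaces requires the Opial-type machinery, but in $\mathbb{R}^n$ collapses to continuity plus uniqueness of limits. The only mild care needed is to phrase the two convergences cleanly so as to invoke uniqueness.
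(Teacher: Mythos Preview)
Your argument is correct: nonexpansiveness gives continuity, so $Tx_n \to Tx$, while $Tx_n = x_n - (x_n - Tx_n) \to x$, and uniqueness of limits yields $Tx = x$. The paper does not actually supply a proof of this lemma --- it is stated as a cited result from \cite{Goebelkirk} (the demiclosedness principle) --- so there is nothing to compare against; your elementary continuity argument is precisely the right one in $\mathbb{R}^n$, where strong and weak convergence coincide and the full Opial/demiclosedness machinery is unnecessary.
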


\noindent Next, we provide some relevant properties of averaged operators.
\begin{prop}(see, e.g., \cite{cby})\label{need1}
For given operators $S$, $T$, and $V$ defined from $\mathbb{R}^n$ to $\mathbb{R}^n$, the following statements are satisfied:
\begin{itemize}
  \item[(a)] If $T = (1-\alpha)S + \alpha V$ for some $\alpha \in (0, 1)$ and if $S$ is averaged and $V$ is nonexpansive, then the operator $T$ is averaged.
\item[(b)] The operator $T$ is firmly nonexpansive if and only if the complement $I- T$ is also firmly nonexpansive.
\item[(c)] If $T = (1-\alpha)S + \alpha V$ for some $\alpha \in (0, 1)$ and if $S$ is firmly nonexpansive and $V$ is nonexpansive, then $T$ is averaged.
\item[(d)] The composite of finitely many averaged operators is averaged. That is, if for each $i=1, \ldots, N$, the operator $T_i$ is averaged, then so is the composite operator $T_1\ldots T_N$. In particular,
if $T_1$ is $\alpha_1$-averaged and $T_2$ is $\alpha_2$-averaged, where $\alpha_1,\alpha_2 \in (0, 1)$, then
the composite $T_1T_2$ is $\alpha$-averaged, where $\alpha = \alpha_1 +\alpha_2-\alpha_1\alpha_2.$\\
\end{itemize}
\end{prop}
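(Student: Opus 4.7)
The plan is to prove each of the four parts directly from the definitions given in Definition~\ref{Def:LipMon}, exploiting throughout the elementary fact that a convex combination of nonexpansive operators is nonexpansive. I would handle (b) first because it is the most self-contained, then (a), then (c) as an immediate corollary of (a), and finally (d) by reducing the general statement to the two-operator case via induction.

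For part (b), I would simply expand the ``averaged form'' characterization of firm nonexpansiveness: writing $T=\tfrac{1}{2}(I+S)$ with $S$ nonexpansive, one has $I-T=\tfrac{1}{2}(I+(-S))$, and since the operator $-S$ is nonexpansive (distances are preserved under negation), $I-T$ is firmly nonexpansive; the converse follows by symmetry. For part (a), the key step is to substitute the averaged representation of $S$, namely $S=(1-\beta)I+\beta N$ with $N$ nonexpansive and $\beta\in(0,1)$, into $T=(1-\alpha)S+\alpha V$. Collecting the identity terms gives
\begin{equation*}
T = (1-\alpha)(1-\beta)\,I + \bigl[(1-\alpha)\beta\,N + \alpha V\bigr],
\end{equation*}
so that setting $\gamma:=\alpha+\beta-\alpha\beta\in(0,1)$ and
\begin{equation*}
W := \frac{(1-\alpha)\beta\,N + \alpha V}{\gamma}
\end{equation*}
exhibits $T$ as $(1-\gamma)I+\gamma W$, where $W$ is a convex combination of the nonexpansive operators $N$ and $V$, hence nonexpansive; thus $T$ is $\gamma$-averaged. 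Part (c) is then immediate, since every firmly nonexpansive operator is $\tfrac{1}{2}$-averaged and therefore the hypotheses of (a) are met.

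For part (d), I would first prove the two-operator case explicitly and then extend to finitely many operators by induction on $N$. Writing $T_i=(1-\alpha_i)I+\alpha_i N_i$ with $N_i$ nonexpansive for $i=1,2$, expanding the composition yields
\begin{equation*}
T_1 T_2 \;=\; (1-\alpha_1)(1-\alpha_2)\,I \;+\; (1-\alpha_1)\alpha_2\,N_2 \;+\; \alpha_1\,N_1 T_2.
\end{equation*}
Since $N_1 T_2$ is nonexpansive (composition of two nonexpansive maps), setting $\alpha:=\alpha_1+\alpha_2-\alpha_1\alpha_2$ and normalizing the last two terms by $\alpha$ identifies $T_1T_2$ as $(1-\alpha)I+\alpha W$ with $W$ nonexpansive, establishing the stated $\alpha$-averagedness. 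The inductive step is routine: if $T_1\cdots T_{N-1}$ is averaged, then applying the two-operator case to $(T_1\cdots T_{N-1})T_N$ finishes the proof.

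I do not anticipate serious obstacles here, since everything reduces to algebraic rearrangements and the stability of nonexpansiveness under convex combinations and compositions. The only bookkeeping point worth checking carefully is that in each case the new averaging constant lies in $(0,1)$, which is where the open-interval assumption $\alpha_i\in(0,1)$ is essential; this explains why $\gamma=\alpha+\beta-\alpha\beta\in(0,1)$ whenever both $\alpha,\beta\in(0,1)$, and similarly for the composite in (d).
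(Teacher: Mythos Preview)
Your proposal is correct: each of the four parts follows exactly as you outline, and the bookkeeping on the averaging constants (e.g., $\gamma=\alpha+\beta-\alpha\beta\in(0,1)$) is sound. Note, however, that the paper does not supply its own proof of this proposition; it is stated with a citation to \cite{cby} and used as a black box, so there is no in-paper argument to compare against---your write-up simply fills in what the authors leave to the reference.
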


\noindent Finally, for the last proposition of this section, we recall the definition of monotonicity of nonlinear operators.

\begin{dfn}\label{Def:LipMon}
Given is a nonlinear operator
$A$ with domain $D(A)$ in $\mathbb{R}^n$ and $\beta, \nu$ are positive constants. Then $A$ is called
\begin{itemize}
\item[(a)] {\em monotone} on $D(A)$ if $\langle Ax - Ay, x-y \rangle\geq 0$ for all $x,y \in D(A)$;
\item[(b)] {\em $\beta$-strongly monotone} if $\langle Ax - Ay, x-y \rangle \geq \beta\|x-y\|^2$ for all $x,y \in D(A)$;
\item[(c)] {\em $\nu$-inverse strongly monotone } ($\nu$-ism, for short) if $ \langle Ax - Ay, x-y \rangle \geq \nu\|Ax-Ay\|^2$ for all $x,y \in D(A)$. \\
\end{itemize}
\end{dfn}
\noindent The following proposition gathers some useful results on the relationship between averaged
operators and inverse strongly monotone operators.

\begin{prop}(\cite{cby})\label{need2}
If $T: \mathbb{R}^n \rightarrow  \mathbb{R}^n$ is an operator, then the following statements hold:
\begin{itemize}
  \item[(a)] $T$ is nonexpansive if and only if the complement $I-T$ is $\frac{1}{2}$-ism;
\item[(b)] If $T$ is $\nu$-ism, then for $\gamma > 0,~~\gamma T$ is $\frac{\nu}{\gamma}$-ism;
\item[(c)] $T$ is averaged if and only if the complement $I-T$ is $\nu$-ism for some $\nu > 1/2$. Indeed, for $\alpha \in (0, 1),~~ T$ is $\alpha$-averaged if and only if $I-T$ is $\frac{1}{2\alpha}$-ism.
\end{itemize}
\end{prop}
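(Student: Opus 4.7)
All three statements of Proposition \ref{need2} are algebraic identities at heart, so the plan is to prove them in the stated order and chain them together: (a) rewrites nonexpansiveness as an inner product inequality, (b) is a one-line rescaling, and (c) is obtained by combining (a) and (b) via the very definition of an averaged operator.

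For (a), I would fix $x,y\in\mathbb{R}^n$ and set $a:=x-y$, $b:=Tx-Ty$, so that $(I-T)x-(I-T)y=a-b$. Expanding with Lemma \ref{lm2}(i),
\begin{equation*}
\langle (I-T)x-(I-T)y,\,x-y\rangle-\tfrac12\|(I-T)x-(I-T)y\|^2
=\langle a-b,a\rangle-\tfrac12\|a-b\|^2=\tfrac12\bigl(\|a\|^2-\|b\|^2\bigr).
\end{equation*}
Thus $I-T$ is $\tfrac12$-ism if and only if $\|b\|\le\|a\|$ for all $x,y$, which is exactly the nonexpansiveness of $T$. This equivalence is the workhorse for the rest.

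For (b) the proof is a direct scaling: if $T$ is $\nu$-ism, then for $\gamma>0$,
\begin{equation*}
\langle \gamma Tx-\gamma Ty,\,x-y\rangle=\gamma\langle Tx-Ty,x-y\rangle\ge\gamma\nu\|Tx-Ty\|^2=\tfrac{\nu}{\gamma}\|\gamma Tx-\gamma Ty\|^2,
\end{equation*}
so $\gamma T$ is $\tfrac{\nu}{\gamma}$-ism. No subtlety here.

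For (c), I would do both implications using the representation $T=(1-\alpha)I+\alpha S$. Assume first that $T$ is $\alpha$-averaged, so that $S:=\alpha^{-1}T-\alpha^{-1}(1-\alpha)I$ is nonexpansive; then $I-T=\alpha(I-S)$, and applying (a) to $S$ and then (b) with $\gamma=\alpha$ shows that $I-T$ is $\tfrac{1}{2\alpha}$-ism. Conversely, suppose $I-T$ is $\tfrac{1}{2\alpha}$-ism and define $S:=\alpha^{-1}T-\alpha^{-1}(1-\alpha)I$, so that $T=(1-\alpha)I+\alpha S$ and $I-S=\alpha^{-1}(I-T)$. By (b) with $\gamma=\alpha^{-1}$, the operator $I-S$ is $\tfrac{1}{2}$-ism, and by (a) this means $S$ is nonexpansive, i.e., $T$ is $\alpha$-averaged. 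The ``some $\nu>1/2$'' form of the equivalence follows by taking $\alpha=\tfrac{1}{2\nu}\in(0,1)$. The only mildly delicate point is bookkeeping the constants $\alpha$ and $\tfrac{1}{2\alpha}$ consistently when switching between the two directions, but once (a) and (b) are in hand this is routine.
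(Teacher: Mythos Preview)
Your argument is correct in all three parts: the polarization identity in (a) cleanly yields the equivalence, (b) is the obvious rescaling, and chaining them through $I-T=\alpha(I-S)$ gives (c) in both directions with the constants tracked properly. Note, however, that the paper does not supply its own proof of this proposition---it is quoted verbatim from \cite{cby} as a preliminary tool---so there is no in-paper argument to compare your approach against; your write-up simply fills in what the authors left to the reference.
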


\section{The algorithm and convergence analysis}\label{Sec:Method}

In this section, we give a precise statement of our method and its convergence
analysis. We first state the assumptions that will be needed throughout  the rest of this paper.

\begin{asm}\label{Ass:VI} Considering problem \eqref{bilevel1}--\eqref{bilevel2}, let the following hold:
\begin{itemize}
   \item[(a)] $f:\mathbb{R}^n\rightarrow \mathbb{R}$ is convex and continuously differentiable such that its gradient is Lipschitz continuous with constant $L_f$.
   \item[(b)] $g:\mathbb{R}^n\rightarrow \left(-\infty,\infty\right]$ is proper, lower semicontinous and convex.
   \item[(c)] $h:\mathbb{R}^n\rightarrow \mathbb{R}$ is strongly convex with parameter $\sigma>0$ and continuously differentiable such that its gradient is Lipschitz continuous with constant $L_h$.
   \item[(d)] The set $X^*$ of all optimal solutions of problem \eqref{bilevel2} is nonempty.
\end{itemize}
\end{asm}

\begin{asm}\label{Ass:Parameters}
Suppose $ \{ \alpha_n \}_{n=1}^\infty$ is a sequence in (0,1) and $ \{ \epsilon_n \}_{n=1}^\infty$ is a positive sequence satisfying the following conditions:
\begin{itemize}
   \item[(a)] $\lim_{n \to \infty}  \alpha_n =0$ and $\sum_{n = 1}^{\infty}\alpha_n =\infty$.
   \item[(b)] $\epsilon_n=o(\alpha_n)$, i.e., $\lim_{n \to \infty} \frac{\epsilon_n}{\alpha_n} =0$ (e.g., $\epsilon_n=\frac{1}{(n+1)^2}, \alpha_n=\frac{1}{n+1}$).
    \item[(c)]   $\lambda \in \left(0,\frac{2}{L_f}\right)$ and $\gamma \in \left(0,\frac{2}{L_h+\sigma}\right]$.
\end{itemize}
\end{asm}

\begin{rem}
\noindent Note that the stepsize $\lambda$ in Assumption (c) above is chosen in a larger interval than that of \cite{SabachShtern}. Also, our Assumption (a) is weaker than Assumption C of \cite{SabachShtern} since $\{\alpha_n\}$ is not required in our Assumption (a) to satisfy $\lim_{n \to \infty}  \frac{\alpha_{n+1}}{\alpha_n}=1$ as assumed in Assumption C of \cite{SabachShtern}. Take, for example,
$\alpha_n=\frac{1}{\sqrt{n}},$ when $n$ is odd and $\alpha_n=\frac{1}{n},$ when $n$ is even.
We see that $\{\alpha_n\}$ satisfies Assumption (a) but $\frac{\alpha_{n+1}}{\alpha_n} \not \to 1$.
\end{rem}

\noindent
We next give a precise statement of our \emph{inertial Bilevel Gradient Sequential Averaging Method} (iBiG-SAM) as follows.

\begin{algorithm}[H]
\caption{iBiG-SAM}
\label{Alg:AlgL}
\begin{algorithmic}
 \STATE \textbf{Step 0}: Choose sequences  $ \{ \alpha_n \}_{n=1}^\infty$ and $ \{ \epsilon_n \}_{n=1}^\infty$
      such that the conditions in Assumption~\ref{Ass:Parameters} hold.
      Select arbitrary points $x_0, x_1 \in \mathbb{R}^n$ and $\alpha\geq 3$. Set $n:=1$.
 \STATE \textbf{Step 1}: Given the iterates $x_{n-1}$ and $x_n$ (with $n \geq 1$), choose $\theta_n$ such that we have
      $0\leq \theta_n \leq \bar{\theta}_n$ with $\bar{\theta}_n$ defined by
 \begin{equation}\label{thetaDefine}
    \bar{\theta}_n :=
\left\{\begin{array}{ll}
      \min\left\{\frac{n-1}{n+\alpha-1},\;\, \frac{\epsilon_n}{\|x_n-x_{n-1}\|}\right\} &  \mbox{if } \;\; x_n\neq x_{n-1},\\[1ex]
      \frac{n-1}{n+\alpha-1} & {\rm otherwise}.
     \end{array}
      \right.
 \end{equation}
 \STATE \textbf{Step 2}: Proceed with the following computations:
 \begin{equation}\label{e31}
 \left\{\begin{array}{l}
         y_n=x_n+\theta_n(x_n-x_{n-1}),\\
         s_n={\rm prox}_{\lambda g}(y_n-\lambda \nabla f(y_n)),\\
         z_n=y_n-\gamma \nabla h(y_n),\\
         x_{n+1}=\alpha_n z_n+(1-\alpha_n)s_n,~~n \geq 1.
         \end{array}
         \right.
 \end{equation}
\end{algorithmic}
\end{algorithm}

\begin{rem}\label{mami}
Observe that from Assumption~\ref{Ass:Parameters}  and Algorithm~\ref{Alg:AlgL} we have that
$$
\lim_{n \to \infty} \theta_n \|x_n-x_{n-1}\|=0 \;\;\mbox{ and } \;\;
\lim_{n \to \infty} \frac{\theta_n }{\alpha_n}\|x_n-x_{n-1}\| =0.
$$
\end{rem}

\noindent
Also note that Step 1 in our Algorithm \ref{Alg:AlgL} is easily implemented in numerical computation
since the value of $\|x_n-x_{n-1}\|$ is a priori known before choosing $\theta_n$.

\noindent
We are now in the position to discuss the convergence of iBIG-SAM. Let us define
\begin{eqnarray}\label{proxgrad}
T_\lambda:={\rm prox}_{\lambda g}(I-\lambda \nabla f).
\end{eqnarray}
\noindent The next lemma shows that the prox-grad mapping $T_\lambda$ is averaged. This is an improvement over Lemma 1(i) of \cite{SabachShtern}.

\begin{lem}\label{use1}
The prox-grad mapping $T_\lambda$ \eqref{proxgrad} is $\frac{2+\lambda L_f}{4}$-averaged for all
$\lambda \in \left(0,\frac{2}{L_f}\right)$.
\end{lem}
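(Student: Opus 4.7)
The plan is to combine the Baillon--Haddad theorem with the operator-theoretic machinery in Propositions \ref{need1} and \ref{need2} of Section \ref{Sec:Prelims}, writing $T_\lambda$ as a composition of two averaged operators whose parameters can be read off.

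First, I would recall that since $f$ is convex and $\nabla f$ is $L_f$-Lipschitz (Assumption \ref{Ass:VI}(a)), the Baillon--Haddad theorem yields that $\nabla f$ is $\frac{1}{L_f}$-inverse strongly monotone. Applying Proposition \ref{need2}(b) with $\gamma=\lambda$, the scaled gradient $\lambda \nabla f$ is therefore $\frac{1}{\lambda L_f}$-ism. For $\lambda\in(0,2/L_f)$ we have $\frac{1}{\lambda L_f}>\frac{1}{2}$, so Proposition \ref{need2}(c) applies and gives that $I-\lambda\nabla f$ is $\alpha_2$-averaged with
\[
\alpha_2 = \frac{1}{2\cdot(1/(\lambda L_f))} = \frac{\lambda L_f}{2}\in(0,1).
\]

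Second, the proximal map $\operatorname{prox}_{\lambda g}$ is firmly nonexpansive by Assumption \ref{Ass:VI}(b) (a standard property of proximal operators of proper, lsc, convex functions), so by Definition \ref{Def:LipMon}(c) it is $\alpha_1$-averaged with $\alpha_1=\frac{1}{2}$.

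Third, I would invoke Proposition \ref{need1}(d) on the composite $T_\lambda = \operatorname{prox}_{\lambda g}\circ (I-\lambda\nabla f)$, which is averaged with parameter
\[
\alpha = \alpha_1+\alpha_2-\alpha_1\alpha_2 = \tfrac{1}{2} + \tfrac{\lambda L_f}{2} - \tfrac{1}{2}\cdot\tfrac{\lambda L_f}{2} = \tfrac{1}{2}+\tfrac{\lambda L_f}{4} = \tfrac{2+\lambda L_f}{4},
\]
which is precisely the claimed constant and lies in $(0,1)$ when $\lambda\in(0,2/L_f)$. There is no real obstacle here; the only delicate point is that the Baillon--Haddad identification of $\nabla f$ as $\frac{1}{L_f}$-ism is not recorded as a lemma in the preliminaries, so I would cite it explicitly (or briefly justify it via the descent lemma together with convexity of $f$) before chaining the propositions. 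Once that input is secured, the result follows by direct arithmetic in the averagedness formula.
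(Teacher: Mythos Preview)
Your proposal is correct and follows essentially the same route as the paper: Baillon--Haddad gives that $\nabla f$ is $\tfrac{1}{L_f}$-ism, hence $I-\lambda\nabla f$ is $\tfrac{\lambda L_f}{2}$-averaged via Proposition~\ref{need2}, $\operatorname{prox}_{\lambda g}$ is $\tfrac{1}{2}$-averaged by firm nonexpansiveness, and Proposition~\ref{need1}(d) then yields the composite constant $\tfrac{2+\lambda L_f}{4}$. If anything, your write-up is slightly more careful in explicitly checking the hypothesis $\tfrac{1}{\lambda L_f}>\tfrac{1}{2}$ before invoking Proposition~\ref{need2}(c).
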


\begin{proof}
Observe that the Lipschitz condition on $\nabla f$ implies that $\nabla f$ is $\frac{1}{L_f}$-ism
(see \cite{baillon}), which then implies that $\lambda \nabla f$ is $\frac{1}{\lambda L_f}$-ism.
 Hence, by Proposition \ref{need2}(c), $I-\lambda \nabla f$ is $(\frac{\lambda L_f}{2})$-averaged.
 Since ${\rm prox}_{\lambda f}$ is firmly nonexpansive and hence $\frac{1}{2}$-averaged, we
see from Proposition \ref{need1}(d) that the composite ${\rm prox}_{\lambda g}(I-\lambda \nabla f)$
is $\frac{2+\lambda L_f}{4}$-averaged for $\lambda \in (0,\frac{2}{L_f})$.
Hence we have that, $T_\lambda={\rm prox}_{\lambda g}(I-\lambda \nabla f)$ is
$\frac{2+\lambda L_f}{4}$-averaged.
Therefore, we can write
\begin{eqnarray}\label{prox}
T_\lambda&=&{\rm prox}_{\lambda g}(I-\lambda \nabla f)= \left( \frac{2-\lambda L_f}{4} \right)I+\left(\frac{2+\lambda L_f}{4} \right )T\\
&=&(1-\beta)I+\beta T\label{prox2},
\end{eqnarray}
where $\beta:=\frac{2+\lambda L_f}{4} \in [a,b] \subset (1/2,1)$ and $T$ is a nonexpansive mapping.
\end{proof}

\noindent Lemma 1(ii) of \cite{SabachShtern} showed the equivalence between the fixed points of  prox-grad mapping $T_\lambda$ \eqref{proxgrad} and optimal solutions of problem \eqref{bilevel2}. That is, $x\in X^*$ if and only if $x \, = \, T_\lambda x$.
This equivalence will be needed in our convergence analysis in this paper.

\begin{lem}(\cite{SabachShtern})\label{use2}
Suppose that Assumption \ref{Ass:VI} (c) holds. Then, the mapping $S_\gamma$, defined by
$
S_\gamma:=I-\gamma \nabla h,
$
 is a contraction for all $\gamma \in \left(0,\frac{2}{L_h+\sigma }\right]$. That is,
$$
\|S_\gamma (x)-S_\gamma (y) \| \leq \eta\|x-y\|,\;\; \forall x,y \in \mathbb{R}^n.
$$
Here, $I$ represents the identity operator and $\eta:=\sqrt{1-\frac{2\gamma\sigma L_h}{\sigma+L_h}}$.
\end{lem}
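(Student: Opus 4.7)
The plan is to bound $\|S_\gamma(x)-S_\gamma(y)\|^2$ by expanding the square and invoking the classical inequality for strongly convex functions with Lipschitz gradient. Concretely, I would start from
\begin{equation*}
\|S_\gamma(x)-S_\gamma(y)\|^2 = \|x-y\|^2 - 2\gamma\langle \nabla h(x)-\nabla h(y),\, x-y\rangle + \gamma^2 \|\nabla h(x)-\nabla h(y)\|^2,
\end{equation*}
obtained directly from Lemma~\ref{lm2}(i) with the substitution $(x-y)$ and $-\gamma(\nabla h(x)-\nabla h(y))$.

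The key ingredient is the combined strong monotonicity/cocoercivity estimate: since $h$ is $\sigma$-strongly convex and $\nabla h$ is $L_h$-Lipschitz, one has
\begin{equation*}
\langle \nabla h(x)-\nabla h(y),\, x-y\rangle \;\geq\; \frac{\sigma L_h}{\sigma+L_h}\|x-y\|^2 + \frac{1}{\sigma+L_h}\|\nabla h(x)-\nabla h(y)\|^2.
\end{equation*}
This is a standard consequence of Baillon--Haddad applied to $h-\tfrac{\sigma}{2}\|\cdot\|^2$ (which is convex and has $(L_h-\sigma)$-Lipschitz gradient); I would quote or very briefly derive it.

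Substituting this lower bound into the expansion yields
\begin{equation*}
\|S_\gamma(x)-S_\gamma(y)\|^2 \;\leq\; \left(1-\frac{2\gamma\sigma L_h}{\sigma+L_h}\right)\|x-y\|^2 + \gamma\!\left(\gamma-\frac{2}{\sigma+L_h}\right)\|\nabla h(x)-\nabla h(y)\|^2.
\end{equation*}
The restriction $\gamma \in \left(0,\frac{2}{L_h+\sigma}\right]$ makes the coefficient of $\|\nabla h(x)-\nabla h(y)\|^2$ non-positive, so that term can be dropped; taking square roots gives the desired contraction with $\eta=\sqrt{1-\frac{2\gamma\sigma L_h}{\sigma+L_h}}$. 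I would also briefly note that $\eta\in[0,1)$: positivity of $\gamma$ gives $\eta<1$, while $\gamma\leq\frac{2}{\sigma+L_h}\leq\frac{\sigma+L_h}{2\sigma L_h}$ (using $(\sigma+L_h)^2\geq 4\sigma L_h$) guarantees $\eta^2\geq 0$.

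The only non-routine step is invoking the strengthened cocoercivity/strong-monotonicity inequality; the rest is an algebraic check. If one prefers to avoid citing that inequality directly, the alternative is to split $h=\tfrac{\sigma}{2}\|\cdot\|^2 + \tilde h$, apply Baillon--Haddad to the convex function $\tilde h$ whose gradient is $(L_h-\sigma)$-Lipschitz, and recombine, at the cost of a slightly longer but entirely mechanical computation.
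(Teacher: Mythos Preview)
Your argument is correct and is in fact the standard proof of this contraction estimate (it is exactly the computation behind, e.g., Nesterov's Theorem~2.1.12 combined with the square expansion). Note, however, that the paper does not give its own proof of Lemma~\ref{use2}: the result is simply quoted from \cite{SabachShtern} (see the citation in the lemma header) and used as a black box, so there is no in-paper proof to compare against. Your write-up would serve perfectly well as the missing justification; the only step worth citing explicitly is the strengthened monotonicity inequality $\langle \nabla h(x)-\nabla h(y), x-y\rangle \geq \frac{\sigma L_h}{\sigma+L_h}\|x-y\|^2 + \frac{1}{\sigma+L_h}\|\nabla h(x)-\nabla h(y)\|^2$, which you correctly identify as the only non-routine ingredient.
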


\noindent By the statements  of Lemma \ref{use1} and Lemma \ref{use2}, we can re-write \eqref{e31} as
\begin{equation}\label{e32}
\left\{\begin{array}{ll}
         y_n=x_n+\theta_n(x_n-x_{n-1}),\\
         x_{n+1}=\alpha_n S_{\gamma}(y_n)+(1-\alpha_n)(1-\beta)y_n+\beta(1-\alpha_n)Ty_n,~~n \geq 1,
         \end{array}
         \right.
\end{equation}
where $T$ is a nonexpansive mapping, $S_{\gamma}$ is a contraction mapping and $\beta:=\frac{2+\lambda L_f}{4}$.\\

\noindent
Before we proceed with the main result of this section, we first show that the iterative sequence generated by our algorithm is bounded.

\begin{lem}\label{c31}
Let Assumptions~\ref{Ass:VI} and \ref{Ass:Parameters} be satisfied.
Then the sequence $\{x_n\}$ generated by Algorithm~\ref{Alg:AlgL} is bounded.
\end{lem}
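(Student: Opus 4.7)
Because $h$ is strongly convex (Assumption \ref{Ass:VI}(c)), problem \eqref{bilevel1}--\eqref{bilevel2} has a unique solution, which I denote by $x^*$. In particular, $x^* \in X^*$, which coincides with the fixed-point set of the averaged operator $T_{\lambda}=(1-\beta)I+\beta T$ from Lemma \ref{use1}; hence $Tx^*=x^*$. The goal is to derive a one-step contraction-type inequality for $\|x_n-x^*\|$ that absorbs the extra inertial perturbation, and then close the argument with Lemma \ref{lm23}(i).

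\textbf{Step-by-step plan.} First, I would use the reformulation \eqref{e32} and the identity $x^*=(1-\beta)x^*+\beta T x^*$ to write
$$
x_{n+1}-x^*=\alpha_n\bigl(S_{\gamma}(y_n)-x^*\bigr)+(1-\alpha_n)\bigl[(1-\beta)(y_n-x^*)+\beta(Ty_n-Tx^*)\bigr].
$$
Since $T$ is nonexpansive, the bracket is bounded in norm by $\|y_n-x^*\|$. For the contraction piece I would use the triangle inequality together with Lemma \ref{use2}:
$$
\|S_{\gamma}(y_n)-x^*\|\le \|S_{\gamma}(y_n)-S_{\gamma}(x^*)\|+\|S_{\gamma}(x^*)-x^*\|\le \eta\,\|y_n-x^*\|+\gamma\|\nabla h(x^*)\|.
$$
Combining these two bounds gives
$$
\|x_{n+1}-x^*\|\le\bigl[1-\alpha_n(1-\eta)\bigr]\,\|y_n-x^*\|+\alpha_n\,\gamma\|\nabla h(x^*)\|.
$$

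Next, from the inertial step $y_n=x_n+\theta_n(x_n-x_{n-1})$ I have $\|y_n-x^*\|\le \|x_n-x^*\|+\theta_n\|x_n-x_{n-1}\|$. The key observation, which I would invoke via Remark \ref{mami}, is that the definition \eqref{thetaDefine} together with $\epsilon_n=o(\alpha_n)$ forces $\theta_n\|x_n-x_{n-1}\|\le\epsilon_n$ whenever $x_n\neq x_{n-1}$, so the ratio $\theta_n\|x_n-x_{n-1}\|/\alpha_n$ is not only convergent to $0$ but uniformly bounded by some constant $M_1\ge 0$. Consequently $\theta_n\|x_n-x_{n-1}\|\le M_1\alpha_n$, and substituting into the previous inequality (and discarding the factor $1-\alpha_n(1-\eta)\le 1$ in front of the inertial term) yields
$$
\|x_{n+1}-x^*\|\le\bigl[1-\alpha_n(1-\eta)\bigr]\,\|x_n-x^*\|+\alpha_n\bigl(M_1+\gamma\|\nabla h(x^*)\|\bigr).
$$

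Finally, I would apply Lemma \ref{lm23}(i) with $a_n=\|x_n-x^*\|$, $\alpha_n\leftarrow \alpha_n(1-\eta)\in(0,1)$ (using $\eta<1$ from Lemma \ref{use2} and $\alpha_n\in(0,1)$), $\sigma_n=\alpha_n(M_1+\gamma\|\nabla h(x^*)\|)$, and $\gamma_n\equiv 0$. Since $\sigma_n\le M\cdot\alpha_n(1-\eta)$ with $M:=(M_1+\gamma\|\nabla h(x^*)\|)/(1-\eta)$, the lemma concludes that $\{\|x_n-x^*\|\}$ is bounded, hence $\{x_n\}$ is bounded. (A direct induction giving $\|x_n-x^*\|\le \max\{\|x_1-x^*\|,M\}$ also works.)

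\textbf{Main obstacle.} There is nothing deep here; the only subtlety is making sure the inertial perturbation $\theta_n(x_n-x_{n-1})$ is absorbed into a term of order $\alpha_n$, so that a standard Xu-type sequence recursion applies. This is exactly what the rule \eqref{thetaDefine} was designed for, via $\epsilon_n=o(\alpha_n)$, and so the argument reduces to a careful bookkeeping of the triangle-inequality estimates above.
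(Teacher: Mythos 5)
Your proposal is correct and follows essentially the same route as the paper: it uses the reformulation \eqref{e32}, the nonexpansiveness of $T$ together with the contraction property of $S_\gamma$ from Lemma \ref{use2}, the triangle-inequality estimate $\|y_n-z\|\le\|x_n-z\|+\theta_n\|x_n-x_{n-1}\|$, the uniform bound on $\theta_n\|x_n-x_{n-1}\|/\alpha_n$ guaranteed by the rule \eqref{thetaDefine} (Remark \ref{mami}), and finally Lemma \ref{lm23}(i). The only (harmless) differences are that you anchor the estimates at the unique bilevel solution $x^*$ rather than an arbitrary $z\in X^*$, and you note the equivalent direct-induction closing argument.
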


\begin{proof}
From \eqref{e31}, for any $z \in X^*$, we have $z \in F(T_{\lambda})=F(T)$. Therefore,
\begin{eqnarray}\label{chi1}
\|x_{n+1}-z\| &\leq& \alpha_n\|S_{\gamma}(y_n)-z\|+(1-\alpha_n)(1-\beta)\|y_n-z\| +\beta(1-\alpha_n)\|Ty_n-z\|\nonumber\\
&\leq&\alpha_n\left(\|S_{\gamma}(y_n)-S_{\gamma}(z)\|+\|S_{\gamma}(z)-z\| \right )+(1-\alpha_n)\|y_n-z\|\nonumber \\
&\leq& \alpha_n\|S_{\gamma}(z)-z\|+(1-\alpha_n(1-\eta))\|y_n-z\|\nonumber \\
&\leq&\alpha_n\|S_{\gamma}(z)-z\|+(1-\alpha_n(1-\eta))(\|x_n-z\|+\theta_n\|x_n-x_{n-1}\|) \nonumber \\
&=&(1-\alpha_n(1-\eta))\|x_n-z\|+(1-\alpha_n(1-\eta))\theta_n\|x_n-x_{n-1}\|\nonumber \\
&& \qquad \qquad+\alpha_n\|S_{\gamma}(z)-z\|\nonumber \\
&=& (1-\alpha_n(1-\eta))\|x_n-z\|+\alpha_n(1-\eta)\frac{\|S_{\gamma}(z)-z\|}{1-\eta}\nonumber \\
&& \qquad \qquad+(1-\alpha_n(1-\eta))\theta_n\|x_n-x_{n-1}\|\nonumber \\
&=& (1-\alpha_n(1-\eta))\|x_n-z\|+\alpha_n\Big(\frac{(1-\eta)}{\alpha_n}\frac{\|S_{\gamma}(z)-z\|}{1-\eta}\nonumber \\
&& \qquad \qquad+(1-\alpha_n(1-\eta))\frac{\theta_n}{\alpha_n}\|x_n-x_{n-1}\|\Big).
\end{eqnarray}
Observe that $\sup_{n\geq 1} (1-\alpha_n(1-\eta))\frac{\theta_n}{\alpha_n}\|x_n-x_{n-1}\|$ exists by Remark \ref{mami} and take
$$
M:=\max~\left\{\frac{(1-\eta)}{\alpha_n}\frac{\|S_{\gamma}(z)-z\|}{1-\eta}, \;\;\sup_{n\geq 1} (1-\alpha_n(1-\eta))\frac{\theta_n}{\alpha_n(1-\eta)}\|x_n-x_{n-1}\|\right\}.
$$
\noindent Then \eqref{chi1} becomes
$$
\|x_{n+1}-z\| \leq (1-\alpha_n(1-\eta))\|x_n-z\|+\alpha_nM.
$$
\noindent By Lemma \ref{lm23} , we get that $\{x_n\}$ is bounded. As a consequence, $\{y_n\} $ is also bounded.
\end{proof}

\begin{thm}\label{t31}
Let Assumptions~\ref{Ass:VI} and \ref{Ass:Parameters} hold.
Then the sequence $\{x_n\} $ generated by Algorithm~\ref{Alg:AlgL}
converges to a point $z \in X^*$ satisfying
\begin{eqnarray}\label{e34}
\langle \nabla h(z),x-z\rangle \geq 0 \quad \forall x \in X^*
\end{eqnarray}
and therefore, $z=z_{mn}$ is the optimal solution of problem \eqref{bilevel1}--\eqref{bilevel2}.
\end{thm}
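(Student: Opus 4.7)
Let $z^\ast := \arg\min_{x\in X^\ast} h(x)$, which exists and is unique by strong convexity of $h$ together with closedness/convexity of $X^\ast$; the classical convex optimality condition then characterizes $z^\ast$ as the unique solution of the variational inequality \eqref{e34}. It therefore suffices to prove $x_n \to z^\ast$. Using Lemma \ref{use1}, I write $T_\lambda = (1-\beta)I + \beta T$ with $T$ nonexpansive and $F(T) = F(T_\lambda) = X^\ast$; Lemma \ref{use2} provides $S_\gamma$ as a contraction with modulus $\eta < 1$.

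The plan is to derive the recursion
\begin{equation*}
\Gamma_{n+1} \leq (1-\alpha_n(1-\eta))\,\Gamma_n + \alpha_n(1-\eta)\,\delta_n,\qquad \Gamma_n := \|x_n - z^\ast\|^2,
\end{equation*}
and then apply Lemma \ref{lm23}(ii) after proving $\limsup_n \delta_n \leq 0$. Splitting $x_{n+1} - z^\ast = \alpha_n(S_\gamma(y_n)-S_\gamma(z^\ast)) + (1-\alpha_n)(T_\lambda y_n - z^\ast) + \alpha_n(S_\gamma(z^\ast)-z^\ast)$, applying Lemma \ref{lm2}(ii) to this grouping, and using the contractivity of $S_\gamma$ and nonexpansiveness of $T_\lambda$, I obtain $\Gamma_{n+1} \leq (1-\alpha_n(1-\eta))\|y_n - z^\ast\|^2 + 2\alpha_n\langle S_\gamma(z^\ast) - z^\ast,\, x_{n+1} - z^\ast\rangle$. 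Expanding $\|y_n - z^\ast\|^2$ via the inertial step, using boundedness of $\{x_n\}$ from Lemma \ref{c31}, and absorbing the inertial excess into a term of size $\alpha_n\cdot o(1)$ via Remark \ref{mami}, one arrives at the recursion with $\delta_n = \tfrac{2}{1-\eta}\langle -\gamma\nabla h(z^\ast), x_{n+1} - z^\ast\rangle + o(1)$.

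The core task reduces to $\limsup_n \langle -\nabla h(z^\ast), x_{n+1} - z^\ast\rangle \leq 0$. I would adopt the standard two-case analysis for sequences that are not quasi-Fej\'er monotone: either $\{\Gamma_n\}$ is eventually nonincreasing, or one works along the peak-index subsequence $\tau(n) = \max\{k\leq n : \Gamma_k < \Gamma_{k+1}\}$. To exploit averagedness, I would refine the estimate by inserting the identity from Lemma \ref{lm2}(iii),
\begin{equation*}
\|T_\lambda y_n - z^\ast\|^2 \leq \|y_n - z^\ast\|^2 - \beta(1-\beta)\|y_n - Ty_n\|^2,
\end{equation*}
into the expansion of $\Gamma_{n+1}$. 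This produces an extra negative term of order $-(1-\alpha_n)\beta(1-\beta)\|y_n - Ty_n\|^2$, which combined with the two-case structure forces $\|y_n - Ty_n\| \to 0$ along the relevant indices. Extracting a cluster point $\hat x$ of $\{y_{n_k}\}$, Lemma \ref{lm25} yields $\hat x \in F(T) = X^\ast$, and the VI characterization of $z^\ast$ then gives $\langle -\nabla h(z^\ast), \hat x - z^\ast\rangle \leq 0$. A routine subsequence argument promotes this pointwise bound to the required limsup, and Lemma \ref{lm23}(ii) delivers $\Gamma_n \to 0$.

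The main obstacle will be executing the two-case argument cleanly in the inertial setting, since $\theta_n(x_n - x_{n-1})$ destroys exact quasi-Fej\'er monotonicity and injects a perturbation into every estimate. The crucial leverage is the adaptive safeguard $\theta_n \leq \epsilon_n/\|x_n - x_{n-1}\|$ from \eqref{thetaDefine} combined with $\epsilon_n = o(\alpha_n)$: it yields both $\theta_n\|x_n-x_{n-1}\| \to 0$ and $\theta_n\|x_n-x_{n-1}\|/\alpha_n \to 0$ (Remark \ref{mami}), which is exactly what is needed to discard the inertial remainders in each of the two Mainge-type branches.
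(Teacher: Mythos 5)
Your proposal is correct and follows essentially the same route as the paper's proof: the same averaged-operator/contraction decomposition of $x_{n+1}-z$ (your use of the averagedness inequality for $T_\lambda$ is just the packaged form of the paper's Lemma~\ref{lm2}(iii) expansion), the same use of Remark~\ref{mami} to absorb the inertial remainders, the same Maing\'e two-case analysis with $\tau(n)$, demicloseness via Lemma~\ref{lm25}, the VI characterization of $z$, and Lemma~\ref{lm23}(ii) to conclude $\Gamma_n\to 0$. No substantive difference or gap to report.
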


\begin{proof}
Start by observing that
\begin{eqnarray}\label{e6}
\|y_n-z\|^2
&=& \|x_n-z\|^2+2\theta_n\langle x_n-x_{n-1},x_n-z\rangle+\theta_n^2\|x_n-x_{n-1}\|^2.
\end{eqnarray}
From Lemma \ref{lm2} (i) it holds
\begin{eqnarray}\label{e7}
2 \langle x_n-x_{n-1}, x_n-z\rangle= -\|x_{n-1}-z\|^2+\|x_n-z\|^2 + \| x_n-x_{n-1}\|^2.
\end{eqnarray}
Substituting \eqref{e7} into \eqref{e6}, we obtain
 \begin{eqnarray}\label{lara}
\|y_n-z\|^2
&=& \|x_n-z\|^2+\theta_n(-\|x_{n-1}-z\|^2+\|x_n-z\|^2+\| x_n-x_{n-1}\|^2) \nonumber\\
 && \quad +\theta_n^2\|x_n-x_{n-1}\|^2\nonumber \\
 &=& \|x_n-z\|^2+\theta_n(\|x_n-z\|^2-\|x_{n-1}-z\|^2)\nonumber\\
 && \quad +\theta_n(1+\theta_n)\|x_n-x_{n-1}\|^2 \nonumber\\
  &\leq & \|x_n-z\|^2+\theta_n(\|x_n-z\|^2-\|x_{n-1}-z\|^2)\nonumber\\
  && \quad +\;\; 2 \theta_n \|x_n-x_{n-1}\|^2,
 \end{eqnarray}
 where the last inequality follows from the fact that $\theta_n \in [0,1)$.
Using Lemma \ref{lm2} (ii) and (iii), we obtain from \eqref{e31} that
 \begin{eqnarray}\label{laraa}
 \|x_{n+1}-z\|^2&=&\|\alpha_n(S_{\gamma}(y_n)-z)+(1-\alpha_n)(1-\beta)(y_n-z)+\beta(1-\alpha_n)(Ty_n-z)\|^2\nonumber\\
  &\leq & \|(1-\alpha_n)(1-\beta)(y_n-z)+\beta(1-\alpha_n)(Ty_n-z)\|^2 \nonumber \\
  && \quad +\;\; 2\langle \alpha_n(S_{\gamma}(y_n)-z), x_{n+1}-z \rangle \nonumber \\
 &=&(1-\alpha_n)^2(1-\beta)\|y_n-z\|^2+\beta(1-\alpha_n)^2\|Ty_n-z\|^2 \nonumber\\
&& \quad -\;\;\beta(1-\beta)(1-\alpha_n)^2\|y_n-Ty_n\|^2+2\alpha_n\langle S_{\gamma}(y_n)-z, x_{n+1}-z \rangle\nonumber \\
&\leq&(1-\alpha_n)^2\|y_n-z\|^2-\beta(1-\beta)(1-\alpha_n)^2\|y_n-Ty_n\|^2\nonumber \\
&& \quad +\;\;2\alpha_n\langle S_{\gamma}(y_n)-z, x_{n+1}-z \rangle.
\end{eqnarray}
Combining \eqref{lara} and \eqref{laraa}, we get
 \begin{eqnarray}\label{lara2}
\|x_{n+1}-z\|^2
&\leq&  (1-\alpha_n)^2 \|x_n-z\|^2-\beta(1-\beta)(1-\alpha_n)^2\|y_n-Ty_n\|^2\nonumber \\
&& \quad +\;\;\theta_n(1-\alpha_n)^2(\|x_n-z\|^2-\|x_{n-1}-z\|^2)\nonumber\\
&&\quad +\;\;2\theta_n(1-\alpha_n)^2\|x_n-x_{n-1}\|^2 \nonumber\\
&& \quad +\;\; 2\alpha_n\left\langle S_{\gamma}(y_n)-z, x_{n+1}-z \right\rangle.
 \end{eqnarray}
Setting
$
\Gamma_n:=\|x_n-z\|^2
$
for all $n \geq 1$, it follows from  \eqref{lara2} that
\begin{eqnarray}\label{dubem}
\Gamma_{n+1}
&\leq&(1-\alpha_n)^2\Gamma_n-\beta(1-\beta)(1-\alpha_n)^2\|y_n-Ty_n\|^2
+\theta_n(1-\alpha_n)^2(\Gamma_n-\Gamma_{n-1})\nonumber\\
&& \quad +\;\;2\theta_n(1-\alpha_n)^2\|x_n-x_{n-1}\|^2+2\alpha_n\langle S_{\gamma}(y_n)-z, x_{n+1}-z \rangle.
\end{eqnarray}

\noindent We consider two cases for the rest of the proof.\\

\noindent {\bf Case 1:}
Suppose there exists a natural number $n_0$ such that $\Gamma_{n+1}\leq\Gamma_n$ for all
$n \geq n_0$. Therefore, $\lim_{n \to \infty} \Gamma_n$ exists.
From \eqref{dubem}, we have
\begin{eqnarray}\label{eq6}
&& \beta(1-\beta)(1-\alpha_n)^2\|y_n-Ty_n\|^2\qquad \qquad \nonumber\\
&\leq& (\Gamma_n-\Gamma_{n+1})+\theta_n(1-\alpha_n)^2 (\Gamma_n-\Gamma_{n-1})\nonumber\\
&&  + \;\; 2\theta_n(1-\alpha_n)^2\|x_n-x_{n-1}\|^2+2\alpha_n\langle S_{\gamma}(y_n)-z, x_{n+1}-z \rangle.
\end{eqnarray}
Using Assumption~\ref{Ass:Parameters} (noting that $\lim_{n \to \infty} \theta_n \|x_n-x_{n-1}\|=0$ and
$\{x_n\}$, $\{y_n\}$ are bounded), we have
$$
\lim_{n \to \infty}\beta(1-\beta)(1-\alpha_n)^2\|y_n-Ty_n\|=0.
$$
\noindent Observe that $ \underset{n\rightarrow \infty}\liminf\beta(1-\beta)(1-\alpha_n)^2=\underset{n\rightarrow \infty}\lim\beta(1-\beta)(1-\alpha_n)^2=\beta(1-\beta) >0$ and this immediately implies that
$$
\lim_{n \to \infty}\|Ty_n-y_n\|=0.
$$
\noindent Since $\{x_n\}$ is bounded, take a subsequence $\{x_{n_k}\}$ of $\{x_n\}$ such that
$x_{n_k}\rightarrow p \in \mathbb{R}^n$  and using the definition of contraction mapping $S_{\gamma}$
 in Lemma \ref{use2}, we have
\begin{eqnarray}\label{eq7}
\limsup_{n \to \infty} \langle S_{\gamma}(z)-z, x_n-z \rangle &=&\lim_{k \to \infty} \langle S_{\gamma}(z)-z, x_{n_k}-z \rangle \nonumber \\
&=& \langle S_{\gamma}(z)-z, p-z \rangle=\langle \nabla h(z),z-p\rangle.
\end{eqnarray}
From $y_n=x_n+\theta_n(x_n-x_{n-1})$, we get
$$
\|y_n-x_n\|=\theta_n\|x_n-x_{n-1}\|\rightarrow 0.
$$
\noindent
Since $x_{n_k}\rightarrow p $, we have $y_{n_k}\rightarrow p $.
Lemma \ref{lm25} then guarantees that $p \in F(T)=X^*$.
Furthermore, we have from \eqref{e34} and \eqref{eq7} that
\begin{eqnarray}\label{ayi}
\limsup_{n \to \infty}~\langle S_{\gamma}(z)-z, x_n-z \rangle \leq 0.
\end{eqnarray}
\noindent
From the contraction of $S_{\gamma}$ and \eqref{lara}, we can write
\begin{eqnarray*}
2\alpha_n\langle S_{\gamma}(y_n)-z, x_{n+1}-z \rangle
&=& 2\alpha_n\langle S_{\gamma}(y_n)-S_{\gamma}(z)+S_{\gamma}(z)-z, x_{n+1}-z \rangle \nonumber\\
&\leq& 2\alpha_n\eta\|y_n-z\|\|x_{n+1}-z\|+2\alpha_n\langle S_{\gamma}(z)-z, x_{n+1}-z \rangle\nonumber\\
&\leq& \alpha_n\eta(\|y_n-z\|^2+\|x_{n+1}-z\|^2)+2\alpha_n\langle S_{\gamma}(z)-z, x_{n+1}-z \rangle\nonumber\\
&\leq& \alpha_n\eta(\Gamma_n+\theta_n(\Gamma_n-\Gamma_{n-1})+2\theta_n\|x_n-x_{n-1}\|^2)\\
&&+\;\;2\alpha_n\langle S_{\gamma}(z)-z, x_{n+1}-z \rangle+\alpha_n\eta\|x_{n+1}-z\|^2.
\end{eqnarray*}
Therefore from \eqref{dubem} it holds
\begin{eqnarray}
\Gamma_{n+1}&\leq&(1-\alpha_n)^2\Gamma_n+\theta_n(1-\alpha_n)^2(\Gamma_n-\Gamma_{n-1})\nonumber\\
&& \quad +\;\;2\theta_n(1-\alpha_n)^2\|x_n-x_{n-1}\|^2+2\alpha_n\langle S_{\gamma}(y_n)-z, x_{n+1}-z \rangle \nonumber\\
&\leq& ((1-\alpha_n)^2+\alpha_n\eta)\Gamma_n +\theta_n((1-\alpha_n)^2+\alpha_n\eta)(\Gamma_n-\Gamma_{n-1})\nonumber\\
&&\quad +\;\;2\theta_n((1-\alpha_n)^2+\alpha_n\eta)\|x_n-x_{n-1}\|^2+2\alpha_n\langle S_{\gamma}(z)-z, x_{n+1}-z \rangle
\nonumber\\
&&\quad +\;\;\alpha_n\eta\|x_{n+1}-z\|^2\nonumber\\
&\leq& ((1-\alpha_n)^2+\alpha_n\eta)\Gamma_n+ \theta_n((1-\alpha_n)^2+\alpha_n\eta)\|x_n-x_{n-1}\|(\sqrt{\Gamma_n}+\sqrt{\Gamma_{n-1}})\nonumber\\
&& \quad +\;\;2\theta_n((1-\alpha_n)^2+\alpha_n\eta)\|x_n-x_{n-1}\|^2+2\alpha_n\langle S_{\gamma}(z)-z, x_{n+1}-z \rangle \nonumber\\
&&\quad +\;\;\alpha_n\eta\|x_{n+1}-z\|^2\nonumber\\
&=&((1-\alpha_n)^2+\alpha_n\eta)\Gamma_n+ \theta_n\|x_n-x_{n-1}\|M_2+\alpha_n\eta\|x_{n+1}-z\|^2\nonumber\\
&& \quad +\;\;2\alpha_n\langle S_{\gamma}(z)-z, x_{n+1}-z \rangle,
\end{eqnarray}
where
$$
M_2:=\sup_{n \geq 1} \left((1-\alpha_n)^2+\alpha_n\eta)(\sqrt{\Gamma_n}+\sqrt{\Gamma_{n-1}}+2((1-\alpha_n)^2+\alpha_n\eta)\|x_n-x_{n-1}\|\right).
$$
Therefore
\begin{eqnarray}\label{eq9}
\Gamma_{n+1}&\leq& \frac{(1-\alpha_n)^2+\alpha_n\eta}{1-\alpha_n \eta}\Gamma_n+ \frac{\theta_n\|x_n-x_{n-1}\|M_2}{1-\alpha_n \eta}\nonumber \\
&& \quad +\;\;2\frac{\alpha_n}{1-\alpha_n \eta}\langle S_{\gamma}(z)-z, x_{n+1}-z \rangle \nonumber \\
&\leq& \left(1-\frac{2(1-\eta)\alpha_n}{1-\alpha_n \eta} \right)\Gamma_n+\frac{\theta_n\|x_n-x_{n-1}\|M_2}{1-\alpha_n \eta}\nonumber \\
&& \quad +\;\;2\frac{\alpha_n}{1-\alpha_n \eta}\langle S_{\gamma}(z)-z, x_{n+1}-z \rangle +\frac{\alpha_n^2}{1-\alpha_n \eta}\Gamma_n\nonumber \\
&\leq& \left(1-\frac{2(1-\eta)\alpha_n}{1-\alpha_n \eta} \right)\Gamma_n \nonumber \\
&& +\;\;\frac{2(1-\eta)\alpha_n}{1-\alpha_n \eta}\Big\{
\frac{\theta_n\|x_n-x_{n-1}\|M_2}{2(1-\eta)\alpha_n} +\frac{\alpha_n \Gamma_{n_0}}{2(1-\eta)}+\frac{1}{1-\eta}\langle S_{\gamma}(z)-z, x_{n+1}-z \rangle \Big\}\nonumber \\
&=&(1-\delta_n)\Gamma_n+\delta_n \sigma_n,
\end{eqnarray}
where $\delta_n:=\frac{2(1-\eta)\alpha_n}{1-\alpha_n \eta} \,$  and
$$
\sigma_n:=\frac{\theta_n\|x_n-x_{n-1}\|M_2}{2(1-\eta)\alpha_n}+\frac{\alpha_n \Gamma_{n_0}}{2(1-\eta)}+\frac{1}{1-\eta}\langle S_{\gamma}(z)-z, x_{n+1}-z \rangle.
$$
Using Lemma \ref{lm23} (ii) and Assumption~~\ref{Ass:Parameters} in \eqref{eq9}, we get
$\Gamma_n=\|x_n-z\|\rightarrow 0$ and thus $x_n\rightarrow z$ as $n\rightarrow \infty$.\\

\noindent {\bf Case 2:}  Assume that there is no $ n_0 \in \mathbb{N} $ such that
$\{\Gamma_{n}\}_{n=n_0}^\infty$ is monotonically decreasing.
Let
$\tau :\mathbb{N}\rightarrow \mathbb{N}$ be a mapping defined for all
$n\geq n_{0}$ (for some $n_{0}$ large enough) by
$$
   \tau(n):=\max~\left\{k\in \mathbb{N}:k\leq n,\Gamma_{k}\leq \Gamma_{k+1}\right\},
$$
i.e.\ $ \tau (n) $ is the largest number $ k $ in $ \{ 1, \ldots, n \} $ such
that $ \Gamma_k $ increases at $ k = \tau (n) $; note that, in view of
Case 2, this $ \tau (n) $ is well-defined for all sufficiently large $ n $.
Clearly, $\tau$ is a non-decreasing sequence \cite{MaingeSIAM} such that $\tau(n)\rightarrow
\infty$ as $n \rightarrow \infty$ and
$$
   0\leq\Gamma_{\tau(n)}\leq\Gamma_{\tau(n)+1}, \quad \forall n\geq n_{0}.
$$
Using similar techniques as in \eqref{eq6}, it is easy to
show that
$$
\lim_{n \to \infty}\|Ty_{\tau(n)}-y_{\tau(n)}\|
=\lim_{n \to \infty}\|y_{\tau(n)}-x_{\tau(n)}\|
=\lim_{n \to \infty}\|Ty_{\tau(n)}-x_{\tau(n)}\|=0.
$$
Furthermore, using the boundedness of $ \{ x_n \} $, $ \{ y_n \} $ and
Assumption \ref{Ass:Parameters}, we get
\begin{eqnarray}\label{s7}
   \|x_{\tau(n)+1}-x_{\tau(n)}\| &\leq&\alpha_{\tau(n)}\|S_{\gamma}\Big(y_{\tau(n)}\Big)-x_{\tau(n)}\|+
   \theta_{\tau(n)}\|y_{\tau(n)}-x_{\tau(n)}\| \nonumber \\
   & & +\;\;(1-\alpha_{\tau(n)})\|Ty_{\tau(n)}-x_{\tau(n)}\|\longrightarrow  0 \, \mbox{ as } \, n \rightarrow \infty.
\end{eqnarray}
Since $\{x_{\tau(n)}\}$ is bounded, there exists a subsequence of
$\{x_{\tau(n)}\}$, still denoted by $\{x_{\tau(n)}\}$, which converges
to some $p\in F(T)$. Similarly, as in Case 1 above, we can show that we have
$\underset{n\rightarrow\infty}\limsup~\left\langle S_\gamma(z)-z, x_{\tau(n)+1}-z\right\rangle
   \leq 0.$
Following \eqref{eq9}, we obtain
\begin{eqnarray}\label{flasche}
\Gamma_{\tau(n)+1}&=&(1-\delta_{\tau(n)})\Gamma_{\tau(n)}+\delta_{\tau(n)} \sigma_{\tau(n)},
\end{eqnarray}
which implies that
$
\|x_{\tau(n)}-z\|^2 \leq \sigma_{\tau(n)}
$
while noting that $\Gamma_{\tau(n)} \leq \Gamma_{\tau(n)+1}$ and $\alpha_{\tau(n)}>0$ hold. This leads to
$
\underset{n\rightarrow\infty}\limsup~\|x_{\tau(n)}-z\|^2 \leq 0.
$
Thus, we have
$$
\lim_{n \to \infty}\|x_{\tau(n)}-z\|=\lim_{n \to \infty} \Gamma_{\tau(n)}=0,
$$
which in turn implies $\underset{n\rightarrow\infty}\lim \|x_{\tau(n)+1}-z\|=0$.
Furthermore, for $n\geq n_{0}$, it is easy to see that $\Gamma_n \leq
\Gamma_{\tau(n)+1}$ (observe that $\tau(n)\leq n$ for $n\geq n_{0}$ and consider the three cases:
$\tau(n)=n$, $\tau(n)=n-1$ and $\tau(n)<n-1$. For the first and second cases, it is obvious that $\Gamma_n \leq
\Gamma_{\tau(n)+1}$ for $n\geq n_{0}$. For the third case $\tau(n)\leq n-2$, we have from the definition of $\tau(n)$ and
for any integer $n\geq n_{0}$ that $\Gamma_{j} \geq \Gamma_{j+1}$ for $\tau(n)+1\leq j\leq n-1.$ Thus,
$\Gamma_{\tau(n)+1} \geq \Gamma_{\tau(n)+2}\geq \cdots\geq \Gamma_{n-1}\geq \Gamma_n$). As a consequence,
we obtain for all sufficiently large $ n $ that
$ 0\leq \Gamma_n \leq \Gamma_{\tau(n)+1} $.
Hence $\underset{n\rightarrow\infty} \lim\Gamma_n=0$. Therefore, $\{x_n\}$
converges to $z$.
\end{proof}

\begin{rem}
Suppose that Assumption \ref{Ass:VI}(c)
is replaced with the following milder condition:
``$h:\mathbb{R}^n \rightarrow \mathbb{R}$ is strongly convex with parameter $\sigma>0$ and $L_h$-Lipschitz continuous''. Then the step involving $z_n$ in Algorithm \ref{Alg:AlgL} can be replaced by
\begin{eqnarray*}
z_n&=&y_n-\gamma \nabla M_{\gamma h}(y_n)\\
&=&y_n-\gamma\frac{1}{\gamma}(y_n-{\rm prox}_{\gamma h}(y_n))\\
&=& {\rm prox}_{\gamma h}(y_n),
\end{eqnarray*}
where $M_{\gamma h}$ is the Moreau envelop of $h$, defined by
$$
M_{\gamma h}(x):=\min_{u \in \mathbb{R}^n}\Big\{h(u)+\frac{1}{2\gamma}\|u-x\|^2 \Big\},
$$
 which is continuously differentiable (see \cite{Bauschkebook}) with $\nabla M_{\gamma h}(x)=\frac{1}{\gamma}(x-{\rm prox}_{\gamma h}(x))$ and global convergence is still obtained as in Theorem \ref{t31} using Lemma 6 of \cite{SabachShtern}.
\end{rem}

\noindent
We give some brief comments on the nonasymptotic $O(1/n^2)$ convergence rate of some estimates obtained in Theorem \ref{t31}.
\begin{rem}
Observe that for Algorithm \ref{Alg:AlgL}, we have
$\theta_n\|x_n-x_{n-1}\| \leq \epsilon_n$ for all $n \geq 1$.
\noindent
If we choose $\epsilon_n:=\frac{c}{n^2}$, where $c>0$, then
$\theta_n\|x_n-x_{n-1}\| \leq \frac{c}{n^2}$ for all $n \geq 1$.
\noindent
 Thus,
$ \theta_n\|x_n-x_{n-1}\| = O(1/n^2)$
\noindent
and consequently
$$ \|y_n-x_n\| =\theta_n\|x_n-x_{n-1}\| = O(1/n^2).$$
\noindent
Full details on the convergence rate of the result in Theorem \ref{t31} is left for further careful investigation in a separate work.
\end{rem}

\section{Numerical Results}\label{Sec:Numerics}
For numerical implementation of our proposed method in Section \ref{Sec:Method} we consider the inverse problems
tested in \cite{SabachShtern} and give numerical comparison with the proposed Algorithm \ref{Alg:AlgL} (iBiG-SAM)
and that of BiG-SAM method in \cite{SabachShtern}. The codes are implemented in Matlab.
We perform all computations on a windows desktop with an Intel(R) Core(TM) i7-2600 CPU at 3.4GHz and 8.00 GB of memory.
We take $\alpha_n=\frac{2\kappa}{n(1-\beta)}$ with $\kappa=0.1$, which is the best choice for BiG-SAM considered in \cite{SabachShtern} and $\beta \in [0,1)$ defined as in \eqref{prox2} and $\theta_n=\bar{\theta}_n$ as in \eqref{thetaDefine} with $\alpha = 3$ and $\epsilon_n=\alpha_n/n^{0.01}$ for iBiG-SAM.
\begin{table}
\caption{Averaged over 100 runs for each problem in Example \ref{example1}}\label{Table1}
\bigskip
\centering
\renewcommand{\arraystretch}{1.25}
\begin{tabular}{c | c c | c c }
\hline
~&iBiG-SAM&&BiG-SAM&~\\
Problem &Number of iterations&time(sec.)&Number of iterations&time(sec.)\\
\hline
Baart &119.15&1.7253&145.67&2.1089\\
\hline
Foxgood &122.04&1.7861&149.78&2.1885\\
\hline
Phillips&120.77&1.7463&148.18&2.1397\\
\hline
\end{tabular}
\end{table}
\begin{exm}\label{example1}
\noindent Following \cite{SabachShtern}, the inner objective function is taking as
$$
\varphi(x):=\frac{1}{2} \|Ax-b\|^2 +\delta_X(x),
$$
\noindent where $\delta_X$ is the indicator function over the nonnegative orthant $X:=\{x \in \mathbb{R}^n: x \geq 0\}$. Furthermore, we take the outer objective function as
\begin{equation}\label{Donali}
h(x) :=\frac{1}{2}x^TQx,
\end{equation}
\noindent where $Q$ is a positive definite matrix.
It is clear that $L_f=\|A^tA\|$ and $L_h=\|Q\|$. We choose $\lambda=\frac{1}{L_f}$ and  $\gamma=\frac{2}{L_h+\sigma}$. \\

\noindent
Following \cite{Beck}, we consider three inverse problems, i.e., \emph{Baart}, \emph{Foxgood}, and \emph{Phillips} \cite{SabachShtern}. For each of these problems, we generated the corresponding $1,000$ by $1,000$
exact linear system of the form $Ax = b$, by applying the relevant function (baart, foxgood, and phillips). We then performed the simulation by adding normally
distributed noise with zero mean to the right-hand-side vector $b$, with deviation  $\rho =0.01$.
The matrix $Q$ is defined by $Q =LL'+I$, where $L$ is generated by the function get-l(1,000,1) from the \emph{regularization
tools} (see \url{http://www.imm.dtu.dk/~pcha/Regutools/}) and approximates the first-derivative operator.
\begin{figure}[H]
\begin{centering}
\includegraphics[width=0.5\textwidth]{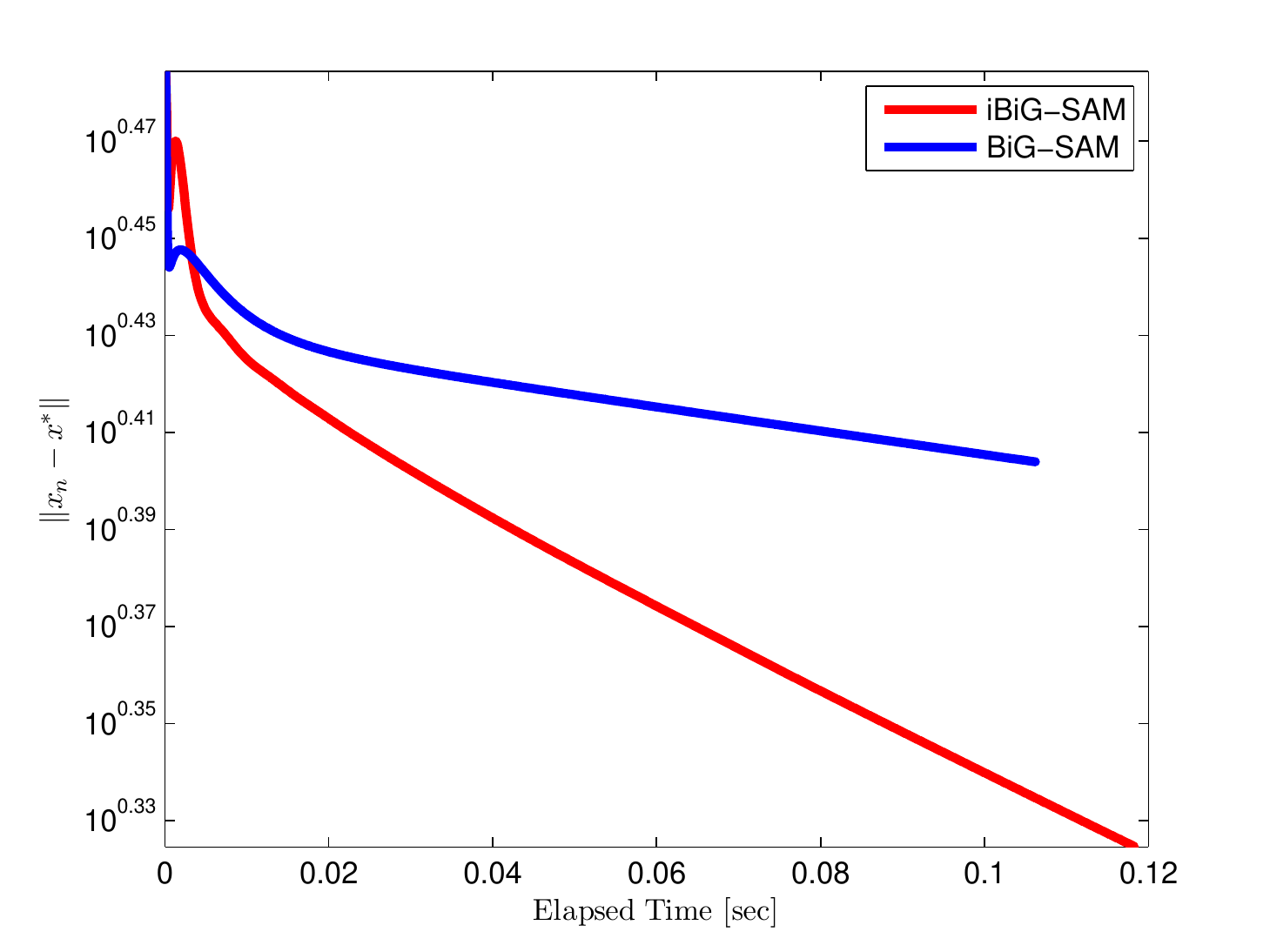}\includegraphics[width=0.5\textwidth]{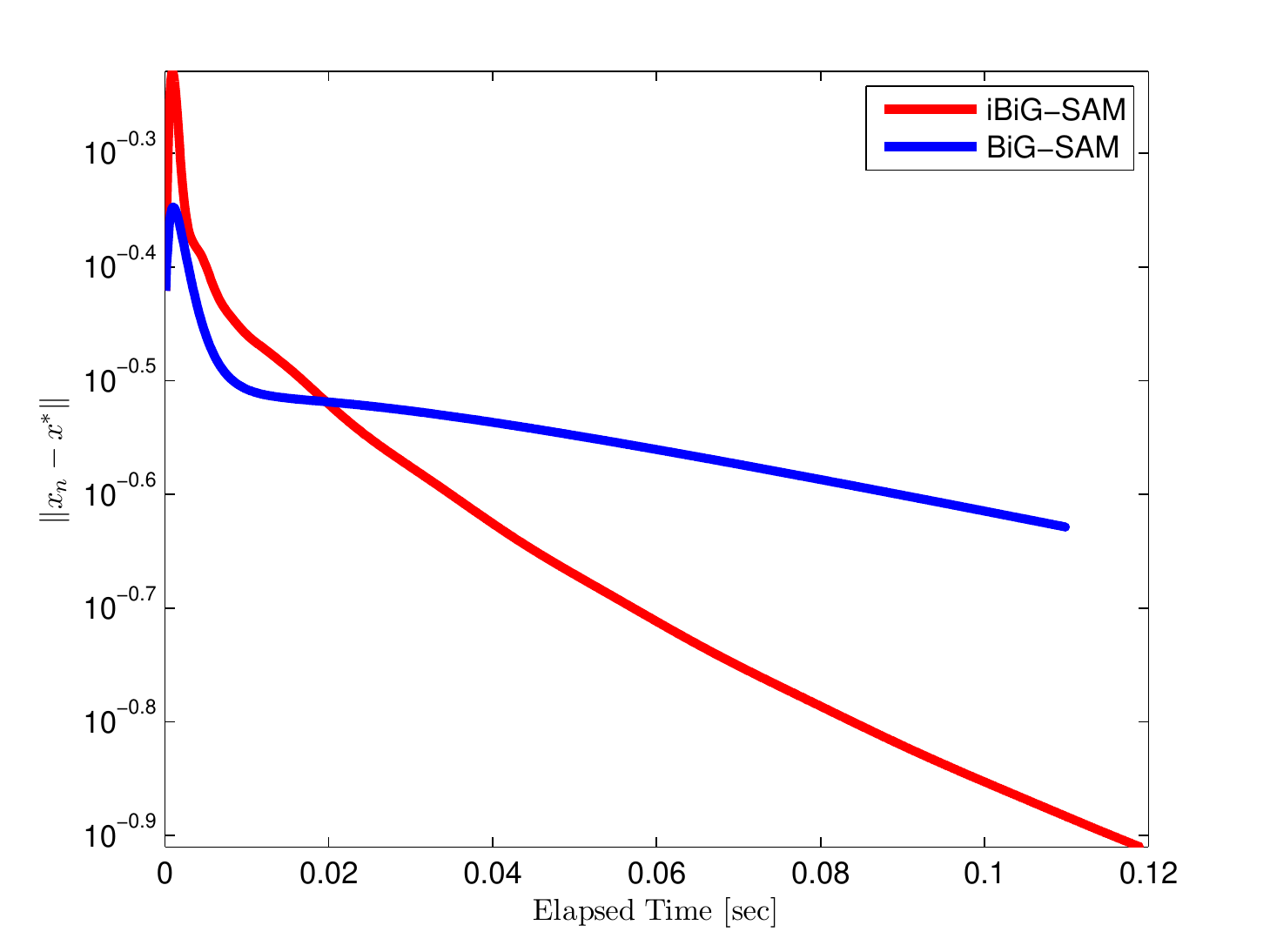}
\par\end{centering}
\protect\caption{Distance to optimal solution v.s. CPU time for problems Baart (left) and Foxgood (right) with $n=100$.} \label{Figure0}
\end{figure}

\noindent Following \cite{SabachShtern}, we use the stopping condition $(\varphi(x_n)-\varphi^* )/\varphi^* \leq 10^{-2}$ for both methods, where $\varphi^*$ is the optimal value of the inner problem computed in advance by BiG-SAM with $1000$ iterations.
In Table \ref{Table1} we present the averaged number of iterations and time (out of 100 runs) until the algorithms reach the stopping criterion.
It can be seen that iBiG-SAM outperforms BiG-SAM (on averaged about 20\%) in all problems tested.\\

\noindent In Figure \ref{Figure0}, we compare the behavior of iBiG-SAM wih BiG-SAM for Baart and Foxgood problems when $n=100$. \qed
\end{exm}

\begin{table}[htp]
\caption{Averaged over 100 runs for each problem in Example \ref{example2}}\label{Table2}
\bigskip
\centering
\renewcommand{\arraystretch}{1.25}
\begin{tabular}{c | c c | c c }
\hline
~&iBiG-SAM&&BiG-SAM&~\\
Parameters &Iterations&time (sec.)&Iterations&time (sec.)\\
\hline
$\alpha=3, m = 100, n = 500$ &43.32&0.0498&60.43&0.0697\\
\hline
$\alpha=4, m = 200, n = 500$&12.25&0.017&18.65&0.0252\\
\hline
$\alpha=5, m = 500, n = 1000$&12.31&0.124&18.07&0.1793\\
\hline
\end{tabular}
\end{table}

\begin{exm}\label{example2}
We now look at the case when $g$ is not an indicator function. In this case, the methods proposed in \cite{Beck,FerrisMangasarian,Solodov2} cannot be applied. We still give a comparison of our method with BiG-SAM \eqref{e1}. The inner objective function is taking here as
$$
\varphi(x):=\frac{1}{2}\|Ax-b\|_{2}^{2}+\mu\|x\|_{1},
$$
where $A\in \mathbb{R}^{m\times n}$ is a given matrix, $b$ is a given vector and $\mu$ a positive scalar. This is LASSO (Least Absolute selection and Shrinkage Operator) \cite{Tibshirami} in compressed sensing. The proximal map with $g(x)=\mu\|x\|_{1}$ is given as ${\rm prox}_{g}(x)=\arg\min_{u}\mu\|x\|_{1}+\dfrac{1}{2}\|u-x\|_{2}^{2},$ which is separable in indices. Thus, for $x\in \mathbb{R}^n$,
   \begin{eqnarray*}
{\rm prox}_{g}(x)\,=\,\left({\rm prox}_{\mu|.|_{1}}(x_1),\ldots,{\rm prox}_{\mu|.|_{1}}(x_n) \right) = \left(\beta_1,\ldots,\beta_n\right),
\end{eqnarray*}
where $\beta_k={\rm sgn}(x_k)\max\{|x_k|-\mu,0\}$ for $k=1,2,\ldots,n$.
As in Example \ref{example1}, we take the outer objective function as in \eqref{Donali}
with $Q$ similarly being a positive definite matrix.
\begin{figure}[htp]
\begin{centering}
\includegraphics[width=0.5\textwidth]{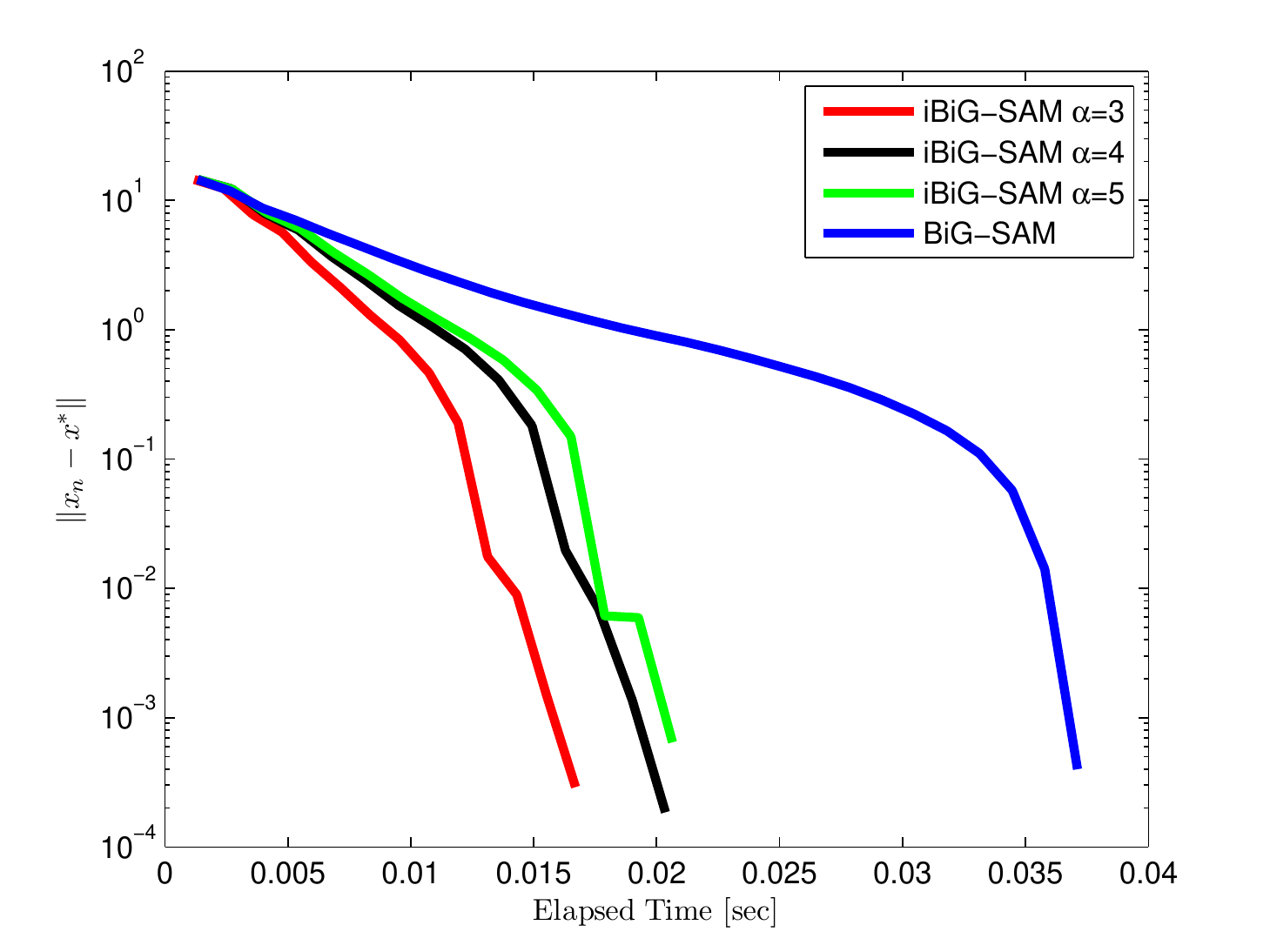}\includegraphics[width=0.5\textwidth]{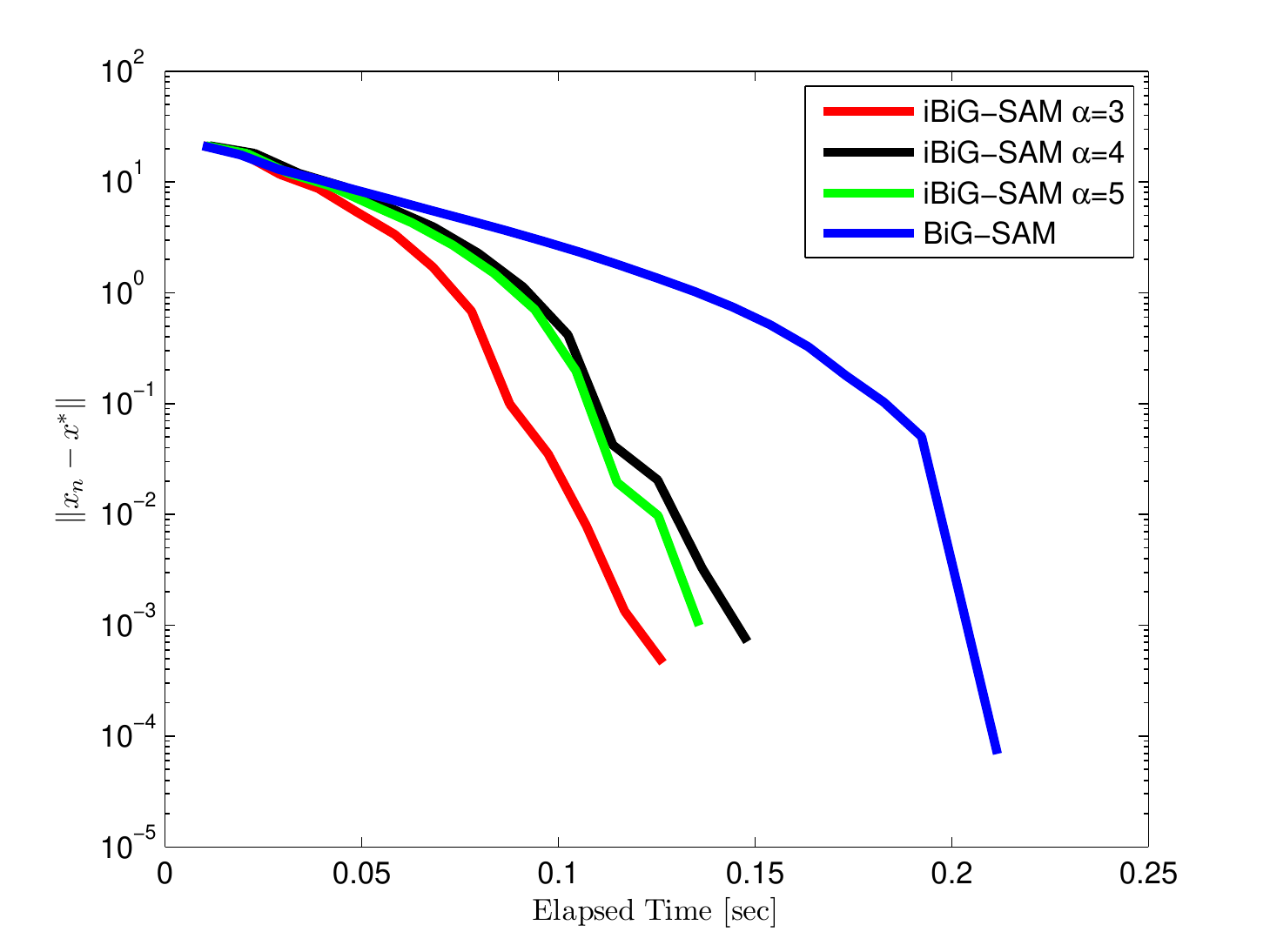}
\par\end{centering}
\protect\caption{Distance to optimal solution v.s. CPU time when $m=100,n=500$ (left) and $m=500,n=1000$ (right)} \label{Figure1}
\end{figure}
We take $\mu=0.5$, and the data $b$ is generated as $Ax + \delta e$, where $A$ and $e$ are random matrices whose elements
are normally distributed with zero mean and variance $1$, and $\delta = 0.01$, and $x$ is a generated
sparse vector. The stopping condition is $\|x_n-x^*\|\leq \epsilon$ with $\epsilon=10^{-3}$ and $x^*$ computed in advance by
BiG-SAM with $1000$ iterations. In Table \ref{Table2} we present the averaged number of iterations and time (out of 100 runs) until the algorithms reach the stopping criterion for different choices of $\alpha \geq 3$ in different dimensional spaces. Again iBiG-SAM outperforms BiG-SAM in all simulations.\\

\noindent In Figure \ref{Figure1}, we compare the behavior of BiG-SAM wih iBiG-SAM for different parameters $\alpha$. It seems that iBiG-SAM with $\alpha=3$ takes advantage over other values tested. \qed
\end{exm}

\noindent Interested readers can download the codes used for the experiments above via the following link (under \emph{iBIG-SAM}), in order to proceed with their own tests on other scenarios  of Examples \ref{example1} and \ref{example2} or to use corresponding adjustments for calculations on new examples:   \url{http://www.southampton.ac.uk/~abz1e14/solvers.html}
\section{Concluding Remarks}\label{Sec:Final}
The paper has introduced and proved the global convergence of an inertial extrapolation-type method for solving simple convex bilevel optimization problems in finite dimensional Euclidean spaces. One can easily check that the results developed here remain valid in infinite dimensional Hilbert spaces.
Based on the numerical experiments conducted, we illustrated that our method outperforms the best known algorithm recently proposed in \cite{SabachShtern} to solve problems of the form  \eqref{bilevel1}--\eqref{bilevel2}.  Our next project in this subject area is to derive the convergence rate of the method proposed in this paper.

\renewcommand{\baselinestretch}{0.95}
{\small{

}}
\end{document}